\newtheorem{theorem}{Theorem}[section]
\newtheorem{lemma}[theorem]{Lemma}
\newtheorem{proposition}[theorem]{Proposition}
\newtheorem{corollary}[theorem]{Corollary}
\theoremstyle{definition}
\newtheorem{definition}[theorem]{Definition}
\theoremstyle{remark}
\newtheorem{remark}[theorem]{Remark}
\numberwithin{equation}{section}
\newcommand{\ces}[1]{\text{ces}({#1})}
\newcommand{\tand}[1]{\text{tand}({#1})}
\renewcommand{\leq}{\leqslant}
\renewcommand{\geq}{\geqslant}
\begin{document}

\setcounter{page}{1}

\title[]{Closed ideals of operators acting on some families of sequence spaces}

\author[B. Wallis]{Ben Wallis}

\email{\textcolor[rgb]{0.00,0.00,0.84}{benwallis@live.com}}




\subjclass[2010]{Primary 46B45; Secondary 46B25, 47A99.}

\keywords{Banach spaces, Lorentz sequence spaces, Garling sequence spaces, closed ideals, operator theory, operator algebras.}


\begin{abstract}
We study the lattice of closed ideals in the algebra of continuous linear operators acting on $p$th Tandori and $p'$th Ces\`{a}ro sequence spaces, $1\leqslant p<\infty$, which we show are isomorphic to the classical sequence spaces $(\oplus_{n=1}^\infty\ell_\infty^n)_p$ and $(\oplus_{n=1}^\infty\ell_1^n)_{p'}$, respectively.  We also show that Tandori sequence spaces are complemented in certain Lorentz sequence spaces, and that the lattice of closed ideals for certain other Lorentz and Garling sequence spaces has infinite cardinality.
\end{abstract} \maketitle


\section{Introduction}

We study the lattice of closed ideals in $\mathcal{L}(X)$, the algebra of continuous linear operators acting on a Banach space $X$, where $X$ is chosen from among some sequence spaces defined below.  In doing so, we obtain a partial description of the lattice when $X$ is chosen from the $p$th Tandori sequence space $\tand{p}$ or $p'$th Ces\'{a}ro sequence space $\ces{p'}$, $1\leq p<\infty$.  In particular, the lattice of closed ideals for $\mathcal{L}(\tand{p})$ consists, at least in part, of the chain
\begin{equation}\label{complete}
\{0\}\subsetneq\mathcal{K}\subsetneq\overline{\mathcal{J}}_{\ell_p}\subsetneq\mathcal{L},
\end{equation}
where any further closed ideals must occur between $\overline{\mathcal{J}}_{\ell_p}$ and the unique maximal ideals identified in \cite{Le15,KL16}.  Due to the fact that $\ces{p'}\approx\tand{p}^*$, dual results for $\mathcal{L}(\ces{p'})$ are also obtained.  These descriptions are corollaries of the first main result of the present paper, which states that $\tand{p}\approx(\oplus_{n=1}^\infty\ell_\infty^{2^{n-1}})_p$ (in the natural way).  This extends a result of Astashkin, Le\'{s}nik, and Maligranda, who had shown the same for the nonreflexive case $p=1$ (\cite[Corollary 3]{ALM18}).

Let us also discuss the following spaces.

\begin{definition}
Let $1\leq p,q\leq\infty$.  We set
$$W_{p,q}:=\left(\bigoplus_{n=1}^\infty\ell_q^n\right)_{\ell_p},\;\;\;\text{ and}\;\;\;W_{0,q}:=\left(\bigoplus_{n=1}^\infty\ell_q^n\right)_{c_0}.$$
Following \cite{KL16}, we shall abbreviate $W_p:=W_{p,\infty}$ and $W_0:=W_{0,\infty}$.
\end{definition}

Several recent papers have studied the closed ideals in $\mathcal{L}(W_{p,q})$ for various choices of $p$ and $q$.  Complete descriptions of the lattice of closed ideals in these algebras, strikingly analogous to \eqref{complete}, have been given for $W_{0,2}$ (\cite{LLR04}) and its dual $W_{1,2}$ (\cite{LSZ06}).  Indeed, $\{0\}\subsetneq\mathcal{K}\subsetneq\overline{\mathcal{J}}_{c_0}\subsetneq\mathcal{L}$ describes the lattice for $W_{0,2}$, and \eqref{complete} describes the lattice for $W_{1,2}$.  Of course, it has been known for decades (\cite{Ca41} for the $\ell_2$ case and \cite{GFM60} for the remaining cases) that the algebras of operators acting on $W_{p,p}(=\ell_p)$, $1\leq p<\infty$, and also $W_{0,0}(=c_0)$, each admit precisely three closed ideals:  $\{0\}\subsetneq\mathcal{K}\subsetneq\mathcal{L}$.

Partial results have been obtained for other choices of $p$ and $q$.  As observed in \cite[\S2]{LL05}, when $1<q<\infty$, due to the fact that the spaces $\ell_2^n$ are all uniformly complemented in $\ell_q^{2^n}$, $n\in\mathbb{N}$, we have $W_{0,2}$ and $W_{p,2}$ complemented in $W_{0,q}$ and $W_{p,q}$, respectively.  It follows from the classifications above that $\mathcal{L}(W_{0,q})$ and $\mathcal{L}(W_{p,q})$, $p\neq q$, each admit at least four distinct closed ideals.  Building on results from the excellent paper \cite{LOSZ12}, unique maximal ideals have been identified in the algebras $\mathcal{L}(W_p)$, in \cite{Le15} for the nonreflexive case $p=1$ and in \cite{KL16} for the reflexive cases $1<p<\infty$.  It is also known from \cite[Example 3.9]{LLR04} that the closed ideal structure of $\mathcal{L}(W_p)$ satisfies \eqref{complete}, and that any further closed ideals lie between $\overline{\mathcal{J}}_p$ and the unique maximal ideals from \cite{Le15,KL16}.

When $p=\infty$, the algebras of operators on $W_{\infty,q}$ are surprisingly little understood.  In case $1\leq q<\infty$, the space $W_{\infty,q}$ is isomorphic to $\ell_\infty(\ell_q)$ (cf., e.g., \cite[Remark 3.4]{CM11}), and hence admits uncountably many closed ideals by the main result in \cite{SW15}.  As for the case $W_{\infty,\infty}(=\ell_\infty\approx L_\infty)$, only a few distinct ideals in its algebra of operators have been identified; most of what we know about it is summarized in \cite[\S3]{LL05}.  We do know, however, that $\mathcal{L}(W_{\infty,\infty})=\mathcal{L}(\ell_\infty)$ has a unique maximal ideal $\mathcal{M}_\infty$, which can be described in a number of different ways, for instance as
$$\mathcal{M}_\infty=\mathcal{W}(\ell_\infty)=\mathcal{SS}(\ell_\infty)=\mathcal{E}(\ell_\infty)=\mathcal{X}(\ell_\infty).$$

\begin{remark}Tomasz Kania and Timur Oikhberg have each independently proved (unpublished) that the maximal ideal in $\mathcal{L}(\ell_\infty)$ given above likewise coincides with the finitely strictly singular operators of $\ell_\infty$, i.e. that $\mathcal{M}_\infty=\mathcal{FSS}(\ell_\infty)$.\end{remark}

All notation is standard, such as appears in \cite{LT77,AA02,AK16}, but let us nevertheless remind the reader of some important facts and symbolism.  If $X$ is a Banach space, an (algebraic) {\bf ideal} in $\mathcal{L}(X)$ is any linear subspace $\mathcal{I}$ of $\mathcal{L}(X)$ such that
$$BTA\in\mathcal{I}\;\;\;\forall\; T\in\mathcal{I},\;\;\forall\;A,B\in\mathcal{L}(X).$$
The letters $\mathcal{F}$, $\mathcal{K}$, $\mathcal{W}$, $\mathcal{SS}$, $\mathcal{FSS}$, $\mathcal{X}$, and $\mathcal{E}$ denote the classes of finite-rank, compact, weakly compact, strictly singular, finitely strictly singular, separable range, and inessential operators, respectively.  These all form operator ideals in the sense of Pietsch (see \cite{DJP01} for a discussion of these), and are all closed except for $\mathcal{F}$.  For a Banach space $X$, the zero ideal $\{0\}$ and the entire algebra $\mathcal{L}(X)$ are called {\it trivial} ideals.  When $X$ has a basis, the smallest nontrivial closed ideal is $\mathcal{K}(X)$; in this case, every nonzero closed ideal in $\mathcal{L}(X)$ contains $\mathcal{K}(X)$.

If $T\in\mathcal{L}(E,F)$, where $E$ and $F$ are Banach spaces, we denote by $\mathcal{J}_T$ the class of all Banach space operators factoring through $T$, i.e. for each pair $X$ and $Y$ of Banach spaces we set
$$\mathcal{J}_T(X,Y)=\left\{BTA:A\in\mathcal{L}(X,E),B\in\mathcal{L}(F,Y)\right\}.$$
This is a multiplicative ideal but not necessarily a linear space.  Hence, we define
$$\mathcal{G}_T=\text{span}\left(\mathcal{J}_T\right),$$
the operator ideal generated by $T$.  In case $Id_E\in\mathcal{L}(E)$ is the identity operator on $E$, we write
$$\mathcal{J}_E:=\mathcal{J}_{Id_E}\;\;\;\text{ and }\;\;\;\mathcal{G}_E:=\mathcal{G}_{Id_E}.$$
If $\mathcal{A}$ is a class of operators, denote by $\overline{\mathcal{A}}(X,Y)$ the closure of the set $\mathcal{A}(X,Y)$, and by $[\mathcal{A}](X,Y)$ the closed linear span of $\mathcal{A}(X,Y)$.  We also denote by $\mathcal{S}_E$ the {\bf $\boldsymbol{E}$-strictly-singular} operators, i.e. the set of operators failing to fix a copy of $E$.  Thus,
$$\mathcal{S}_E(X,Y)=\left\{T\in\mathcal{L}(X,Y):T|_F\text{ is not bdd. below for any }F\subset X\text{ s.t. }F\approx E\right\}.$$

The remainder of this paper is organized into sections 2 through 4.  In section 2, we define Lorentz and Garling sequence spaces $d(w,p)$ and $g(w,p)$, and summarize what is known about the closed ideal structure of their operator algebras.  In section 3 we extend the result \cite[Corollary 3]{ALM18} to isomorphically identify the $p$th Tandori sequence space $\tand{p}$ with $W_p$, and the $p'$th Ces\'{a}ro sequence space $\ces{p'}$ with $W_p^*$, for all $1\leq p<\infty$.  Finally, in section 4, show that $W_p$ is complemented in $d(w,p)$ and $g(w,p)$ for $1\leq p<\infty$ and for certain choices of weight $w$, and use these facts to study the closed ideal structures of $\mathcal{L}(\tand{p})$ and $\mathcal{L}(\ces{p'})$.

\section{Closed ideals in $\mathcal{L}(d(w,p))$ and $\mathcal{L}(g(w,p))$}

In \cite{KPSTT12} was introduced the first serious study of the closed ideal structure for the Lorentz sequence space operator algebra $\mathcal{L}(d(w,p))$, $1\leq p<\infty$, $w\in\mathcal{W}$,
$$\mathcal{W}=\left\{(w_i)_{i=1}^\infty\in(0,\infty)^\mathbb{N}\cap c_0\setminus\ell_1:1=w_1\geq w_2\geq w_3\cdots\right\}.$$
In that excellent paper, the authors gave the following partial description.
$$
\xymatrix @R=0pt@C=12pt{ & &  &\mathcal{SS}\ar@{=>}[dr]& & & \\\{0\}\ar@{=>}[r]& \mathcal K\subsetneq\overline{\mathcal{J}}_j\ar[r]&\overline{\mathcal{J}}_{\ell_p}\cap\mathcal{SS}\ar@{:>}[dr]\ar[ur]& & [\mathcal{J}_{\ell_p}+\mathcal{SS}]\ar[r]&\mathcal{S}_{d(w,p)}\ar@{=>}[r]&\mathcal{L}\\ & & &\overline{\mathcal{J}}_{\ell_p}\ar[ur]& & & }
$$
Here, $j:\ell_p\to d(w,p)$ denotes the formal identity (the natural map between canonical bases).  We are suppressing the ``$(d(w,p))$'' in notation like ``$\mathcal{L}(d(w,p))$.''  A single arrow ($\longrightarrow$) means $\subseteq$, but that we do not know whether the inclusion is strict.  A double arrow ($\Longrightarrow$) means a unique immediate successor.  In contrast, a dotted double arrow ($\xymatrix @C=20pt{\ar@{:>}[r]&}$) indicates an immediate successor which may or may not be unique.

Some additional information about the ideal structure of $\mathcal{L}(d(w,p))$ is not represented in the above diagram.  For instance, the authors proved in \cite[Theorem 3.5]{KPSTT12} that $\mathcal{FSS}(d(w,p))=\mathcal{SS}(d(w,p))$ and in \cite[Theorem 3.6]{KPSTT12} that in the special case $p=1$ these ideals also coincide with the weakly compact operators.  Also they showed (\cite[Theorem 4.7]{KPSTT12}) that whenever $w\in\mathcal{W}$ satisfies condition (2SB) in Definition \ref{nuc-2sb} below, the set $\overline{\mathcal{J}}_j(d(w,p))$ is the unique immediate successor of $\mathcal{K}(d(w,p))$.  (We do not currently know whether this is the case for other choices of weights.)

Let us now explicitly define the Lorentz and Garling sequence spaces.

A sequence of positive real numbers is a {\it weight}.  We are especially interested in nonincreasing null weights which are not summable, i.e. the weights in the family  $\mathcal{W}$ defined above.  Let $\Pi$ denote the set of permutations on $\mathbb{N}$.  Then for any $1\leq p<\infty$ and any $w\in\mathcal{W}$, we define a function $\|\cdot\|_{d(w,p)}:\mathbb{K}^\mathbb{N}\to[0,\infty]$ by the rule
$$\|(a_n)_{n=1}^\infty\|_{d(w,p)}=\sup_{\sigma\in\Pi}\left(\sum_{n=1}^\infty|a_{\sigma(n)}|^pw_n\right)^{1/p}.$$
(Here, we are using $\mathbb{K}\in\{\mathbb{R},\mathbb{C}\}$, depending on whether we are working in the real or complex field.)
Usually, context allows us to abbreviate the notation $\|\cdot\|_d=\|\cdot\|_{d(w,p)}$.  We can now define the {\bf Lorentz sequence space} via
$$d(w,p)=\left\{(a_n)_{n=1}^\infty\in\mathbb{K}^\mathbb{N}:\|(a_n)_{n=1}^\infty\|_d<\infty\right\},\;\;\;w\in\mathcal{W},\;p\in[1,\infty).$$
It is well-known that $d(w,p)$ forms a Banach space under the norm $\|\cdot\|_d$, and admits a standard basis of unit vectors, usually denoted $(d_n)_{n=1}^\infty$, which is 1-symmetric.

Let $\mathbb{N}^\uparrow$ denote the set of all strictly increasing sequences of positive integers.  For $1\leq p<\infty$ and $w\in\mathcal{W}$ we can define a function $\|\cdot\|_{g(w,p)}:\mathbb{K}^\mathbb{N}\to[0,\infty]$ by the rule
$$\|(a_n)_{n=1}^\infty\|_{g(w,p)}=\sup_{(n_k)_{k=1}^\infty\in\mathbb{N}^\uparrow}\left(\sum_{k=1}^\infty|a_{n_k}|^pw_k\right)^{1/p}.$$
As expected, we write $\|\cdot\|_g=\|\cdot\|_{g(w,p)}$ when context permits.  Define the {\bf Garling sequence space} via
$$g(w,p)=\left\{(a_n)_{n=1}^\infty\in\mathbb{K}^\mathbb{N}:\|(a_n)_{n=1}^\infty\|_g<\infty\right\},\;\;\;w\in\mathcal{W},\;p\in[1,\infty).$$
It is known that $g(w,p)$ is a Banach space under the norm $\|\cdot\|_g$ whose unit vectors $(g_n)_{n=1}^\infty$ form a 1-subsymmetric basis.

As the definition of $g(w,p)$ is so similar to that of $d(w,p)$, it should not surprise us that many of the methods for finding closed ideals in the operator algebra $\mathcal{L}(d(w,p))$ carry over for $\mathcal{L}(g(w,p))$.  Let us give a list of facts analogous to those in \cite{KPSTT12} for $g(w,p)$ instead of $d(w,p)$.

\begin{theorem}Let $1\leq p<\infty$ and $w\in\mathcal{W}$.  Let $j_g:\ell_p\to g(w,p)$ denote the formal identity, i.e. the natural map between canonical bases.
\begin{itemize}\item[(i)]  $\overline{\mathcal{J}}_{\ell_p}(g(w,p))$ is a proper ideal.
\item[(ii)]  If $T\in\mathcal{L}(g(w,p))\setminus\mathcal{SS}(d(w,p))$ then $\mathcal{J}_{\ell_p}(g(w,p))\subseteq\mathcal{J}_T(g(w,p))$.  Consequently, $[\mathcal{J}_{\ell_p}+\mathcal{SS}](g(w,p))$ is the unique immediate successor of $\mathcal{SS}(g(w,p))$, and $\overline{\mathcal{J}}_{\ell_p}(g(w,p))$ is an immediate successor of $\overline{\mathcal{J}}_{\ell_p}\cap\mathcal{SS}(g(w,p))$.
\item[(iii)]  In case $p=1$ we have $\mathcal{W}(g(w,1))=\mathcal{SS}(g(w,1))$.
\item[(iv)]  $\mathcal{J}_{j_g}(g(w,p))$ is an ideal in $\mathcal{L}(g(w,p))$.
\item[(v)]  $j_g$ is class $\mathcal{FSS}$.
\item[(vi)]  $\mathcal{K}(g(w,p))\subsetneq\overline{\mathcal{J}}_{j_g}(g(w,p))$ and $\mathcal{J}_{j_g}(g(w,p))\subseteq(\mathcal{SS}\cap\mathcal{J}_{\ell_p})(g(w,p))$.
\item[(vii)]  $\mathcal{S}_{g(w,p)}(g(w,p))$ is the unique maximal ideal in $\mathcal{L}(g(w,p))$.
\end{itemize}\end{theorem}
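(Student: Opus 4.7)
The plan is to carry over, essentially item by item, the proof of the analogous theorem for $d(w,p)$ in \cite{KPSTT12}, substituting the 1-subsymmetric basis $(g_n)$ of $g(w,p)$ for the 1-symmetric basis $(d_n)$ of $d(w,p)$. The transfer succeeds because the uses of symmetry in \cite{KPSTT12} are in essentially every case aimed at obtaining disjointly supported block sequences uniformly equivalent to the canonical basis, and the 1-subsymmetry of $(g_n)$ already delivers this: any subsequence $(g_{n_k})$ is 1-equivalent to $(g_n)$. I would address the items in the logical order (iv)$\to$(v)$\to$(vi)$\to$(i)$\to$(ii)$\to$(iii)$\to$(vii).

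For (iv), I would use $\ell_p \cong \ell_p \oplus_p \ell_p$ together with a partition $\mathbb{N} = N_1 \sqcup N_2$ into infinite subsets: given $T_i = B_i j_g A_i$ for $i=1,2$, subsymmetry gives $[g_n]_{n \in N_i} \approx g(w,p)$ with complementary block isomorphisms, so $T_1 + T_2$ admits a global factoring of the form $B j_g A$. For (v), I would bound the Bernstein numbers $b_n(j_g)$: after a small perturbation reducing to a finitely supported $n$-dimensional subspace $F \subseteq \ell_p^N$, exhibit a unit vector $x \in F$ whose decreasing rearrangement is approximately $n^{-1/p}\chi_{[1,n]}$ (via a John-ellipsoid style averaging argument in $\ell_p^N$), which gives $\|j_g x\|_g \leq C(n^{-1}\sum_{k=1}^n w_k)^{1/p} \to 0$ by $w \in c_0$. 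For (vi), the inclusions $\mathcal{J}_{j_g} \subseteq \mathcal{J}_{\ell_p}$ and $\mathcal{J}_{j_g} \subseteq \mathcal{SS}$ are formal, the latter from (v) via $\mathcal{FSS} \subseteq \mathcal{SS}$; non-compactness of $j_g$ (witnessed by $j_g e_n = g_n$ not being norm-null against the weakly null $(e_n)$) then promotes to a non-compact element of $\mathcal{J}_{j_g}(g(w,p))$ via a lacunary $\ell_p$-embedding $\ell_p \hookrightarrow g(w,p)$, $e_n \mapsto g_{2^n}$, composed with a bounded projection.

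Items (i) and (ii) are the crux. The key technical claim I would prove is: for $T \in \mathcal{L}(g(w,p)) \setminus \mathcal{SS}(g(w,p))$, the inclusion $\mathcal{J}_{\ell_p}(g(w,p)) \subseteq \mathcal{J}_T(g(w,p))$ holds. Applying a gliding-hump argument to an infinite-dimensional subspace on which $T$ is bounded below, one extracts a normalized block basis $(u_k)$ of $(g_n)$; the sparseness inherent in the gliding hump, combined with the $g(w,p)$-norm formula and 1-subsymmetry, forces $(u_k)$ (and hence $(Tu_k)$) to be $\ell_p$-equivalent. A bounded projection onto $[u_k]$ exists by a standard biorthogonal-functional argument, and this yields the required factoring. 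Part (i) then follows because $Id_{g(w,p)} \notin \overline{\mathcal{J}}_{\ell_p}$: indeed $g(w,p) \not\approx \ell_p$, as witnessed by $\|g_1 + \cdots + g_n\|_g = (\sum_{k=1}^n w_k)^{1/p} = o(n^{1/p})$ since $w \in c_0$, whereas $\|e_1 + \cdots + e_n\|_{\ell_p} = n^{1/p}$.

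For (iii), Rosenthal's $\ell_1$-theorem gives $\mathcal{SS}(g(w,1)) \subseteq \mathcal{W}(g(w,1))$: if $T$ is not weakly compact, some bounded $(x_n)$ has $(T x_n)$ with no weakly Cauchy subsequence, so a subsequence is $\ell_1$-equivalent and $T$ fixes $\ell_1 \hookrightarrow g(w,1)$, contradicting strict singularity. The reverse inclusion $\mathcal{W} \subseteq \mathcal{SS}$ holds since $\ell_1$ is non-reflexive, so a weakly compact $T$ cannot fix $\ell_1$ and hence, by (ii), must lie in $\mathcal{SS}$. For (vii), $\mathcal{S}_{g(w,p)}$ is plainly a proper ideal (since $Id \notin \mathcal{S}_{g(w,p)}$), so it remains to prove that every proper ideal sits inside it; equivalently, if $T \notin \mathcal{S}_{g(w,p)}$ then $Id_{g(w,p)} \in \mathcal{J}_T$. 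Given $T$ bounded below on a copy $F \approx g(w,p)$, the final step is to exhibit a complemented such copy and to factor the identity through $T$ via a Pe\l czy\'nski decomposition exploiting a self-similarity $g(w,p) \cong (\oplus g(w,p))_{\ell_p}$ (or a weaker block-decomposition sufficient for the factoring). The main obstacle is precisely this step: while the $d(w,p)$ analogue uses symmetry of the basis, in the subsymmetric setting the relevant isomorphism must be re-verified with careful attention to the weight $w$, and a secondary subtlety is to ensure that the $\ell_p$-block basis from the gliding-hump of (ii) spans a complemented subspace--though this is standard for bases with unconditional (or even subsymmetric) structure.
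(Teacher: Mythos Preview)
Your overall strategy --- transfer the \cite{KPSTT12} arguments to $g(w,p)$ --- is exactly the paper's approach, and items (i) and (iv) are essentially right. But several of your proposed implementations have concrete errors or gaps that the paper avoids by citing structural results about $g(w,p)$ established in \cite{AAW18}, rather than attempting to re-derive them from subsymmetry alone.

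In (vi), the map $e_n\mapsto g_{2^n}$ is \emph{not} an $\ell_p$-embedding: by 1-subsymmetry $(g_{2^n})$ is 1-equivalent to $(g_n)$ itself, not to the $\ell_p$ basis. In (ii), the gliding hump does not force the resulting block basis $(u_k)$ to be $\ell_p$-equivalent --- indeed $(g_n)$ is a block basis of itself and is not $\ell_p$-equivalent --- so ``sparseness'' alone is insufficient; the paper instead invokes \cite[Theorem 3.1(v)]{AAW18} (every infinite-dimensional subspace of $g(w,p)$ contains a complemented copy of $\ell_p$), which is precisely the hypothesis under which the proof of \cite[Theorem 3.1]{KPSTT12} runs verbatim. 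For (iii), your Rosenthal argument tacitly needs weak sequential completeness of $g(w,1)$ to pass from ``no weakly convergent subsequence'' to ``no weakly Cauchy subsequence''; the paper makes this explicit, observing that $g(w,1)$ is a Banach lattice not containing $c_0$. In (vii), the Pe\l czy\'nski-style decomposition you flag as the main obstacle is not the route taken: the paper cites two further facts from \cite{AAW18}, namely that every seminormalized block basis of $(g_n)$ admits a subsequence dominating $(g_n)$ (\cite[Corollary 3.6]{AAW18}) and that every subspace of $g(w,p)$ isomorphic to $g(w,p)$ contains a further such subspace complemented in $g(w,p)$ (\cite[Theorem 3.1(ii)]{AAW18}); together with weak nullity of $(g_n)$ these feed directly into the hypotheses of \cite[Theorem 5.3]{KPSTT12}. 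Finally, for (v) your Bernstein-number computation may well succeed, but the paper bypasses it entirely via the one-line factorization $j_g = Id_{d(w,p),g(w,p)}\circ j$, so that $j_g\in\mathcal{FSS}$ follows immediately from the known result for $j:\ell_p\to d(w,p)$ and the ideal property of $\mathcal{FSS}$.
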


\begin{proof}[Proof outline.]
(i)  Identical to \cite[Theorem 2.3]{KPSTT12}.  In fact, they show that $\overline{\mathcal{J}}_{\ell_p}(X)$ is a proper ideal whenever $X$ is not isomorphic to $\ell_p$.

(ii)  Identical to \cite[Theorem 3.1]{KPSTT12}.  Their proof is valid for any space $X$ such that that every infinite-dimensional subspace of $X$ contains a complemented copy of $\ell_p$, which is true for $X=g(w,p)$ by \cite[Theorem 3.1(v)]{AAW18}.

(iii)  Identical to \cite[Theorem 3.6]{KPSTT12}.  The proof is valid for any $X$ which is weakly sequentially complete and not isomorphic to $\ell_1$.  The latter condition is clear.  As $g(w,1)$ is a Banach lattice not containing $c_0$, it is weakly sequentially complete.

(iv)  Identical to \cite[Proposition 4.1]{KPSTT12}.  More generally, they show that if $X$ has an unconditional basis dominated by the $\ell_p$ basis then, letting $j_X:\ell_p\to X$ denote the formal identity, $\mathcal{J}_{j_X}(X)$ is an ideal in $\mathcal{L}(X)$.

(v)  This follows from the fact that the formal identity $j:\ell_p\to d(w,p)$ is $\mathcal{FSS}$ (\cite[Theorem 4.3]{KPSTT12}), together with the ideal property for $\mathcal{FSS}$ and the fact that ${j_g}=Id_{d(w,p),g(w,p)}\circ j$.

(vi)  This is an immediate corollary to (v).

(vii) Identical to \cite[Theorem 5.3]{KPSTT12}.  To show that $\mathcal{S}_X(X)$ is the unique maximal ideal in $\mathcal{L}(X)$, the proof requires first that $X$ has a subsymmetric weakly null basis $(x_n)_{n=1}^\infty$ such that every seminormalized block basis admits a subsequence dominating $(x_n)_{n=1}^\infty$. Obviously the unit vector basis of $g(w,p)$ is subsymmetric, and it is weakly null by \cite[Corollary 2.15]{AAW18}.  Due to \cite[Corollary 3.6]{AAW18}, the latter property holds as well.  They also require that every subspace of $X$ which is isomorphic to $X$ contains a further subspace which is complemented in $X$ and isomorphic to $X$.  The space $g(w,p)$ has this property by \cite[Theorem 3.1(ii)]{AAW18}.
\end{proof}

We can now give the following partial description of the closed ideal structure of $\mathcal{L}(g(w,p))$, for all choices of $w\in\mathcal{W}$ and all $1\leq p<\infty$.

$$
\xymatrix @R=0pt@C=12pt{ & &  &\mathcal{SS}\ar@{=>}[dr]& & & \\\{0\}\ar@{=>}[r]& \mathcal K\subsetneq\overline{\mathcal{J}}_{j_g}\ar[r]&\overline{\mathcal{J}}_{\ell_p}\cap\mathcal{SS}\ar@{:>}[dr]\ar[ur]& &[\mathcal{J}_{\ell_p}+\mathcal{SS}]\ar[r]&\mathcal{S}_{g(w,p)}\ar@{=>}[r]&\mathcal{L}\\ & & &\overline{\mathcal{J}}_{\ell_p}\ar[ur]& & & }
$$

Let us give a few final remarks relevant to the closed ideal lattice for $\mathcal{L}(d(w,p))$ and $\mathcal{L}(g(w,p))$.  In \cite{SZ15}, the authors showed that $\mathcal{L}(\ell_p\oplus\ell_q)$ admits continuum many closed ideals when $1<p<q<\infty$.  In \cite{Wa16} it was observed that their proof is valid for $\mathcal{L}(X)$ whenever $X$ contains a complemented copy of $\ell_p$, $p\in(1,2)$, and a copy (not necessarily complemented) of $\ell_q$, $q\in(p,\infty)$.  It turns out that a closer reading of their proof permits even weaker hypotheses still.  Indeed, we have the following.

\begin{theorem}\label{generalized-6}Fix any $1<p<2$ and $p<q<\infty$.  Let $X$ be a real Banach space containing a complemented copy of $\ell_p$ and a seminormalized basic sequence dominated by the canonical basis for the space $W_{q,2}$.  Let $X_\mathbb{C}$ denote the complexification of $X$.  Then $\mathcal{L}(X)$ and $\mathcal{L}(X_\mathbb{C})$ each admit a chain of closed subideals with cardinality of the continuum, which are all contained in the class $\mathcal{J}_{\ell_p}\cap\mathcal{FSS}$.\end{theorem}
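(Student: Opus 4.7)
The plan is to revisit the Schlumprecht--Zs\'ak continuum-of-ideals construction for $\mathcal{L}(\ell_p\oplus\ell_q)$ in \cite{SZ15}, as refined in \cite{Wa16}, and verify that its distinguishing lower-bound estimates invoke $\ell_q$ only through its uniformly complemented $\ell_2^n$-block structure. Since that block structure sits inside $W_{q,2}$ by construction, a seminormalised basic sequence in $X$ dominated by the $W_{q,2}$-basis should suffice to transplant the chain into $\mathcal{L}(X)$.

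Concretely, I would fix a projection $P:X\to Y$ onto a subspace $Y\approx\ell_p$, an isomorphism $R:Y\to\ell_p$, and set $\Pi:=RP:X\to\ell_p$. Enumerating the canonical basis of $W_{q,2}$ as $(e_{n,k})_{n\geq 1,\,1\leq k\leq n}$, I would let $J:W_{q,2}\to X$ be the bounded linear extension sending $e_{n,k}$ to the corresponding vector of the dominated basic sequence in $X$, with $\|J\|$ at most the domination constant. For each parameter $t$ in the continuum-size set $\Lambda$ used in \cite{SZ15}, let $T_t:\ell_p\to\ell_q$ be the corresponding Schlumprecht--Zs\'ak operator, and let $\iota:\ell_q\hookrightarrow W_{q,2}$ be the isometric embedding sending the $n$-th basis vector of $\ell_q$ to $e_{n,1}$. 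Define $S_t:=J\iota T_t\Pi\in\mathcal{L}(X)$. Then $S_t\in\mathcal{J}_{\ell_p}(X)$ by construction, and since the formal identity $\ell_p\to\ell_q$ is $\mathcal{FSS}$ for $p<q$, each $T_t$ lies in the closed ideal generated by it, whence $S_t\in\mathcal{FSS}(X)$ by the ideal property of $\mathcal{FSS}$.

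To show that $(\overline{\mathcal{G}}_{S_t})_{t\in\Lambda}$ inherits from $(\overline{\mathcal{G}}_{T_t})_{t\in\Lambda}$ the structure of a strictly increasing continuum chain of closed ideals, I would argue that any hypothetical near-factorisation $\|S_s-BS_tA\|<\varepsilon$ in $\mathcal{L}(X)$ yields, after composing with $\Pi$ on one side and with a bounded left inverse for $J\iota$ on the relevant finite-dimensional ranges (supplied by the uniform complementation of $\ell_2^n$ inside $W_{q,2}$) on the other, a near-factorisation $T_s\approx\widetilde{B}T_t\widetilde{A}$ in $\mathcal{L}(\ell_p,\ell_q)$, contradicting the strict ordering in \cite{SZ15}. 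The complexification step is standard: the canonical extension $T\mapsto T_\mathbb{C}$ satisfies $\|T\|\leq\|T_\mathbb{C}\|\leq 2\|T\|$, respects composition, and preserves both $\mathcal{J}_{\ell_p}$ and $\mathcal{FSS}$.

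The main obstacle is the detailed inspection of the Schlumprecht--Zs\'ak argument needed in the chain verification: one must locate every use of the $\ell_q$-norm in their lower-bound estimates and check that at each use only the uniformly complemented $\ell_2^n$-block structure is invoked. This is precisely the observation already made in \cite{Wa16} when passing from a complemented copy of $\ell_q$ to a merely isomorphic copy; the present extension to a dominated sequence requires a parallel reading of the same argument, now absorbing a bounded factor of $\|J\|$ together with the uniform complementation constants of the $\ell_2^n$-blocks in $W_{q,2}$, with the critical point being that all these constants are uniform in $t$ and in the finite-dimensional stage of the construction.
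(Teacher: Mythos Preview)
Your separation argument has a genuine gap. You propose to pull back a near-factorisation $\|S_s - BS_tA\| < \varepsilon$ in $\mathcal{L}(X)$ to one in $\mathcal{L}(\ell_p, \ell_q)$ by composing on one side with $\Pi$ and on the other with ``a bounded left inverse for $J\iota$ on the relevant finite-dimensional ranges.'' But no such left inverse exists with a uniform bound. The hypothesis is only that the basic sequence $(x_{n,k})$ in $X$ is \emph{dominated} by the $W_{q,2}$-basis, so $J:W_{q,2}\to X$ is bounded; there is no reason for $J$ (or $J\iota$) to be bounded below, even when restricted to the $\ell_2^n$-blocks with a constant independent of $n$. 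The uniform complementation of $\ell_2^n$ inside $W_{q,2}$ produces projections acting on $W_{q,2}$, not on $X$; after applying $J$ those projections do not survive. Concretely, the image $(x_{n,k})_{k=1}^n$ is merely a seminormalised basic sequence of length $n$ dominated by $\ell_2^n$, so it could for instance be uniformly equivalent to the $\ell_\infty^n$ basis, in which case the inverse of $J|_{\ell_2^n}$ has norm of order $\sqrt{n}$. Without a uniform left inverse the reduction to \cite{SZ15} collapses, and the strict inclusions cannot be inferred.

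The paper avoids any return trip from $X$ to $W_{q,2}$. It works in the component $\mathcal{L}(\ell_p, X)$ of the operator ideals $[\mathcal{J}^{I_{Y_{\textbf{w}},Z_q}}]$ (with $Z_q = W_{q,2}$) and constructs the separating functional directly there: letting $(x_i^{(m)*})$ be the biorthogonal functionals in $X^*$ to the given basic sequence (these \emph{are} uniformly bounded, precisely because the sequence is seminormalised and basic), one sets $\Phi_m(V) = m^{-1}\sum_{i=1}^m x_i^{(m)*}(Vg_i^{(m)})$ and passes to a weak* accumulation point $\Phi$. The estimate showing that $\Phi$ annihilates $[\mathcal{J}^{I_{Y_{\textbf{v}},Z_q}}](\ell_p,X)$ then reduces to a pure $Z_q$-norm bound on $\limsup_m m^{-1}\sum_i\|I_{Y_{\textbf{v}},Z_q}B_mg_i^{(m)}\|_{Z_q}$, isolated as a lemma and read off from the proof of \cite[Theorem~6]{SZ15}; the only input from $X$ is the uniform bound on the coordinate functionals. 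The one-sided domination hypothesis is used exactly once, to make the formal identity $I_{Z_q,D}$ bounded so that the test operator $JI_{Z_q,D}I_{Y_{\textbf{w}},Z_q}P_{\textbf{w}}$ lies in $\mathcal{L}(\ell_p, X)$ with $\Phi$-value $1$. The strict chain in $\mathcal{L}(\ell_p, X)$ then transfers to $\mathcal{L}(X)$ via the complementation of $\ell_p$, and to $\mathcal{L}(X_\mathbb{C})$ via \cite[Propositions~2.2 and~2.3]{Wa16}; your remarks on membership in $\mathcal{J}_{\ell_p}\cap\mathcal{FSS}$ are essentially correct.
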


The proof is almost identical to that found in \cite{SZ15}, but for completeness we shall walk through it.

We begin with some preliminary definitions.  If $n\in\mathbb{N}$, then let $(e_{p,j}^{(n)})_{j=1}^n$ denote the canonical unit vector basis for the real version of $\ell_p^n$.  For $0<w\leq 1$, we denote by $E_{p,w}^{(n)}$ the closed linear span of $(e_{p,j}^{(n)}\oplus we_{2,j}^{(n)})_{j=1}^n$ in $\ell_p^n\oplus_\infty\ell_2^n$, i.e. the space $\mathbb{R}^n$ endowed with the norm
$$\|(a_j)_{j=1}^n\|_{E_{p,w}^{(n)}}=\|(a_j)_{j=1}^n\|_{\ell_p}\vee w\|(a_j)_{j=1}^n\|_{\ell_2}.$$

\begin{proposition}[{\cite[Proposition 1]{SZ15}}]Let $1<p<2$, $0<w\leq 1$, and $n\in\mathbb{N}$, and let $(e_j^{(n)*})_{j=1}^n$ denote the biorthogonal functionals to the natural basis $(e_{p',j}^{(n)}\oplus we_{2,j}^{(n)})_{j=1}^n$ for $E_{p',w}^{(n)}$.  Then there exists a constant $K_p$, depending only on $p$ (and independent of $w$ and $n$), and a normalized 1-unconditional basic sequence $(f_j^{(n)})_{j=1}^n$ in $\ell_p^{3^n}$ satisfying
$$
\frac{1}{K_p}\left\|\sum_{j=1}^na_je_j^{(n)*}\right\|_{E_{p',w}^{(n)*}}
\leq\left\|\sum_{j=1}^na_jf_j^{(n)}\right\|_{\ell_p^{3^n}}
\leq\left\|\sum_{j=1}^na_je_j^{(n)*}\right\|_{E_{p',w}^{(n)}}.
$$
Furthermore, there exists a projection $P_{p,w}^{(n)}:\ell_p^{3^n}\to\ell_p^{3^n}$ onto the closed linear span $F_{p,w}^{(n)}$ of $(f_j^{(n)})_{j=1}^n$ such that $\|P_{p,w}^{(n)}\|\leq K_p$.\end{proposition}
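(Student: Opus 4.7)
The plan is to construct $(f_j^{(n)})_{j=1}^n$ as an isomorphic copy of the basis of $E_{p',w}^{(n)*}$ inside $\ell_p^{3^n}$, using a direct sum of the $\ell_p$ basis and Khintchine--Rademacher vectors, and then to read off the sandwich inequalities. First I would unpack the dual norm on $E_{p',w}^{(n)*}$: since $E_{p',w}^{(n)}$ has norm $\|\cdot\|_{p'}\vee w\|\cdot\|_2$ and unit ball $B_{\ell_{p'}^n}\cap w^{-1}B_{\ell_2^n}$, its polar is $\overline{\mathrm{conv}}(B_{\ell_p^n}\cup wB_{\ell_2^n})$, and so, identifying $E_{p',w}^{(n)*}$ with $\mathbb{R}^n$ via the biorthogonal basis $(e_j^{(n)*})$,
$$\Bigl\|\sum_{j=1}^n a_je_j^{(n)*}\Bigr\|_{E_{p',w}^{(n)*}}=\inf\bigl\{\|b\|_p+w^{-1}\|c\|_2:a=b+c\bigr\}.$$

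Next I would invoke Khintchine's inequality for $L_p$: in $\ell_p^{2^n}$, the normalized Walsh vectors $(h_j)_{j=1}^n$, with entries $\pm 2^{-n/p}$ indexed by the $j$th coordinate function on $\{\pm 1\}^n$, form a $1$-unconditional basic sequence with $\|h_j\|_p=1$ and
$$K_p^{-1}\|c\|_2\leq\Bigl\|\sum_{j=1}^n c_jh_j\Bigr\|_p\leq K_p\|c\|_2,$$
where $K_p$ depends only on $p$. Since $n+2^n\leq 3^n$ for every $n\geq 1$, I can place $f_j^{(n)}$ inside a direct-sum block $\ell_p^n\oplus_p\ell_p^{2^n}\subseteq\ell_p^{3^n}$ as a normalized combination of $e_j^{(n)}$ and a suitably $w$-scaled copy of $h_j$. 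The $1$-unconditionality of $(f_j^{(n)})$ is then inherited directly from the $1$-unconditionality of the $\ell_p$ basis and of the Walsh basis, the construction being coordinatewise diagonal.

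For the sandwich inequality, the upper bound is obtained by testing against an approximately optimal decomposition $a=b+c$ in the infimum defining $\|\cdot\|_{E_{p',w}^{(n)*}}$ and applying the $\ell_p$ direct-sum formula together with Khintchine's upper estimate. The lower bound, given any decomposition $a=b+c$, uses the two block-projections of $\sum a_jf_j^{(n)}$ onto $\ell_p^n$ and $\ell_p^{2^n}$, couples $\|c\|_2$ to the Walsh block's $\ell_p$-size via Khintchine's lower bound, and then infimizes over $(b,c)$. The projection $P_{p,w}^{(n)}$ onto $F_{p,w}^{(n)}$ is assembled as the direct sum of the coordinate projection onto the first $n$ coordinates (norm $1$) and the standard Khintchine projection of $\ell_p^{2^n}$ onto $\operatorname{span}(h_j)$ (norm $\leq K_p$), yielding $\|P_{p,w}^{(n)}\|\leq K_p$. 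The main obstacle will be calibrating the $w$-dependent scalings of the two blocks so that the $\ell_p$-norm of $\sum a_jf_j^{(n)}$ faithfully reproduces the $\inf$-decomposition norm on $E_{p',w}^{(n)*}$: the naive diagonal construction would instead produce a $\max$-type norm $\sim(\|a\|_p^p+w^{-p}\|a\|_2^p)^{1/p}$, and smoothing out this mismatch by working with arbitrary decompositions rather than the diagonal one is exactly the content of the Schlumprecht--Zs\'ak argument.
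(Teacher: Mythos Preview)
The paper does not supply its own proof of this proposition; it is quoted from \cite{SZ15}. Measured against the actual \cite{SZ15} argument, your proposal has a real gap, and the obstacle you flag in your last sentence is fatal rather than cosmetic.

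You correctly identify the dual norm as $\|a\|_{E_{p',w}^{(n)*}}=\inf\{\|b\|_p+w^{-1}\|c\|_2:a=b+c\}$, but the Khintchine--Walsh construction cannot realize it. With $f_j=\gamma(e_j\oplus\mu h_j)\in\ell_p^n\oplus_p\ell_p^{2^n}$, Khintchine gives $\|\sum a_jf_j\|_p\approx_{K_p}(\|a\|_p^p+\mu^p\|a\|_2^p)^{1/p}\approx\max(\|a\|_p,\mu\|a\|_2)$, as you say. Testing on $a=e_1$ (where the infimum norm equals $1$, taking $b=a$) forces $\mu$ to be bounded by the putative constant $K_p$; but then for $a=(1,\dots,1)$ and $n$ large relative to $w^{-1}$, the diagonal norm is $\sim n^{1/p}$ while the infimum norm is at most $w^{-1}n^{1/2}$, and the ratio blows up. No scaling of the two blocks fixes this uniformly in $(w,n)$. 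Your reading of $3^n$ as room for $n+2^n$ coordinates is a symptom of the wrong construction.

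What \cite{SZ15} actually uses is a Rosenthal embedding, not a Khintchine one. Take independent symmetric three-valued random variables $\xi_1,\dots,\xi_n$ on the product probability space $\{-1,0,1\}^n$ (this is the source of the dimension $3^n$), with $\mathbb{P}(\xi_j=\pm 1)=\theta/2$ and $\theta\in(0,1]$ chosen so that, after normalization in $L_{p'}$, one has $\|\xi_j\|_2=w$. Rosenthal's inequality for $p'>2$ makes their span in $L_{p'}$ $K_p$-isomorphic to $E_{p',w}^{(n)}$ and $K_p$-complemented; by duality the span of the same variables (renormalized in $L_p$) inside $L_p(\{-1,0,1\}^n)\cong\ell_p^{3^n}$ is $K_p$-isomorphic to $E_{p',w}^{(n)*}$ and $K_p$-complemented, which is exactly the statement. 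The infimum structure of the dual norm is produced by the independence of the $\xi_j$, not by any block decomposition of the target $\ell_p$-space.
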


If $1<p<2$ and $\textbf{w}=(w_n)_{n=1}^\infty$ is a sequence in $(0,1]$ then we define
$$
Y_{p,\textbf{w}}
=(\oplus F_{p,w_n}^{(n)})_{\ell_p}.
$$
When $p$ is clear from context, we will write $Y_{\textbf{w}}=Y_{p,\textbf{w}}$.  Note that there exists a continuous linear projection
$$
P_{\textbf{w}}=\oplus P_{p,w}^{(n)}\in\mathcal{L}(\ell_p)
$$
onto the space $Y_{\textbf{w}}$.  Finally, for the sake of following the notation of \cite{SZ15}, let us define, for each $1\leq q<\infty$, the space
$$
Z_q=W_{q,2}=(\oplus\ell_2^n)_{\ell_q}.
$$
We now prove, with nearly identical methods, a slightly different version of \cite[Theorem 6]{SZ15}.  Note that in what follows, the isomorphism between $Z_q$ and $\ell_q$ from the original proof is replaced with a bounded below operator $J$ from the span of a normalized basis dominated by $Z_q$ into an arbitrary Banach space $X$.

\begin{lemma}\label{main-lemma}Fix any $1<p<2$ and $p<q<\infty$.  Let $\emph{\textbf{v}}=(v_n)_{n=1}^\infty$ and $\emph{\textbf{w}}=(w_n)_{n=1}^\infty$ be decreasing sequences in $(0,1]$ satisfying $v_n,w_n\geq n^{-\eta}$, $n\in\mathbb{Z}^+$, $\eta:=\frac{1}{p}-\frac{1}{2}$, and also
$$
\lim_{n\to\infty}\frac{v_{\sqrt{cn}}}{w_n}=0\;\;\;\text{ for all }c\in(0,1),
$$
where we denote $v_0=1$ and $v_x=v_{\lfloor x\rfloor}$ for $x\in\mathbb{R}^+$.

Let $D$ be a real Banach space with a normalized basis which is dominated by the canonical basis for $Z_q$, and let $J:D\to X$ be an embedding of $D$ into a real Banach space $X$.  Then
$$
[\mathcal{J}^{I_{Y_{\emph{\textbf{v}}},Z_q}}](\ell_p,X)\subsetneq[\mathcal{J}^{I_{Y_{\emph{\textbf{w}}},Z_q}}](\ell_p,X).
$$
\end{lemma}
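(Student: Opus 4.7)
The plan is to follow the argument of \cite[Theorem 6]{SZ15} closely, with the crucial modification that the isomorphism $Z_q\approx\ell_q$ used there is replaced throughout by the factorization $Z_q\to D\xrightarrow{J}X$. The first arrow is the bounded formal identity between canonical bases, which exists precisely because the basis of $D$ is dominated by that of $Z_q$; the second is the given bounded-below embedding. Composing with $J$ transports the $\ell_q$-side geometry of \cite{SZ15} into $X$, and the bounded-below hypothesis on $J$ is exactly what allows lower estimates to survive the passage.

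For the forward inclusion $[\mathcal{J}^{I_{Y_{\textbf{v}},Z_q}}](\ell_p,X)\subseteq[\mathcal{J}^{I_{Y_{\textbf{w}},Z_q}}](\ell_p,X)$, I would use the asymptotic $v_n/w_n\to 0$ (a consequence of the hypothesis $v_{\sqrt{cn}}/w_n\to 0$ via $v_n\leq v_{\sqrt{cn}}$ and monotonicity) to factor $I_{Y_{\textbf{v}},Z_q}$ through $I_{Y_{\textbf{w}},Z_q}$ up to arbitrary approximation. Block-diagonally, this amounts to building, for each $n$, a bounded map $F_{p,v_n}^{(n)}\to F_{p,w_n}^{(n)}$ whose composition with the embedding into $\ell_2^n\subseteq Z_q$ approximates the corresponding restriction of $I_{Y_{\textbf{v}},Z_q}$. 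The projections $P_{p,w}^{(n)}$ of the preceding proposition, uniformly bounded in $n$ and $w$, then glue the block maps into a single operator $R:Y_{\textbf{v}}\to Y_{\textbf{w}}$ that realizes the factorization.

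The strict inclusion is the substantive content. Following \cite{SZ15}, I would construct an explicit witness $T_0\in\mathcal{J}^{I_{Y_{\textbf{w}},Z_q}}(\ell_p,X)$ by composing the projection $P_{\textbf{w}}$, the formal identity $I_{Y_{\textbf{w}},Z_q}$, the map $Z_q\to D$, and $J$, and then bound below its approximation (or Weyl) numbers on the finite-dimensional subspaces coming from the $n$-th block. On those blocks the $k$-th such number for $I_{Y_{\textbf{w}},Z_q}$ is governed by $w_n$, whereas any operator factoring through $I_{Y_{\textbf{v}},Z_q}$ yields corresponding numbers governed by $v_m$ with $m$ comparable to $\sqrt{cn}$ — exactly where the hypothesis $v_{\sqrt{cn}}/w_n\to 0$ enters. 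The resulting asymptotic gap rules out approximation of $T_0$ by any finite sum of $I_{Y_{\textbf{v}},Z_q}$-factoring operators, yielding the desired strict inclusion.

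The principal obstacle is verifying that the singular-number calculations of \cite{SZ15}, which leaned on the explicit $\ell_q$-geometry of the target, survive transfer through an arbitrary bounded-below $J$. The key observation is that if $\|Ju\|_X\geq c\|u\|_D$ for every $u\in D$, then for any operator $A$ with range in $D$ one has $a_k(J\circ A)\geq c\,a_k(A)$ for the relevant approximation numbers. Tracking this constant $c$ carefully through the lower bounds of \cite{SZ15}, and checking that the domination of the basis of $D$ by that of $Z_q$ is enough to pull the corresponding upper bounds across as well, is the one place where the adaptation demands genuine verification rather than verbatim translation.
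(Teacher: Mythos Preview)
Your proposal diverges from the paper's proof at the strict-inclusion step, and the divergence introduces a genuine gap.

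The paper, following \cite[Theorem~6]{SZ15}, does \emph{not} argue via approximation or Weyl numbers. It separates the witness $T_0=JI_{Z_q,D}I_{Y_{\textbf{w}},Z_q}P_{\textbf{w}}$ from $[\mathcal{J}^{I_{Y_{\textbf{v}},Z_q}}](\ell_p,X)$ by a linear functional. Writing $(x_i^{(m)*})\subset X^*$ for the biorthogonal functionals to the basic sequence $(Jd_i^{(m)})$, one sets
\[
\Phi_m(V)=\frac{1}{m}\sum_{i=1}^m x_i^{(m)*}\bigl(Vg_i^{(m)}\bigr),\qquad V\in\mathcal{L}(\ell_p,X),
\]
and takes a weak$^*$ accumulation point $\Phi$. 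The bounded-below hypothesis on $J$ is used only to make the $x_i^{(m)*}$, and hence the $\Phi_m$, uniformly bounded. That $\Phi$ annihilates every $AI_{Y_{\textbf{v}},Z_q}B$ follows from an averaging lemma extracted from the proof of \cite[Theorem~6]{SZ15}: for any norm-$\leq 1$ maps $B_m\colon G_m\to Y_{\textbf{v}}$,
\[
\limsup_{m\to\infty}\frac{1}{m}\sum_{i=1}^m\bigl\|I_{Y_{\textbf{v}},Z_q}B_mg_i^{(m)}\bigr\|_{Z_q}=0.
\]
Linearity and continuity of $\Phi$ then extend the annihilation to the closed linear span automatically, while $\Phi(T_0)=1$ is immediate from the definitions.

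Your $s$-number route has two soft spots. First, the inequality $a_k(JA)\geq c\,a_k(A)$ for bounded-below $J$ is not a general property of approximation numbers: a rank-$(k{-}1)$ approximant to $JA$ in $\mathcal{L}(\ell_p,X)$ need not take values in $J(D)$, so no left inverse is available to strip off $J$; injectivity of this kind is a feature of Gelfand numbers, not of $a_k$. Second, even granting a lower bound for a single factorization $AI_{Y_{\textbf{v}},Z_q}B$, you must still rule out approximation by \emph{finite sums} of such operators and then pass to the closure, and $s$-numbers are not additive. You flag this as ``the one place where the adaptation demands genuine verification,'' but it is precisely the heart of the argument; the functional method sidesteps both issues simultaneously, which is why \cite{SZ15} and the present paper use it.
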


\begin{proof}
Since it was shown in the proof to \cite[Corollary 7]{SZ15} that $I_{Y_{\textbf{v}},Z_q}$ factors through $I_{Y_{\textbf{w}},Z_q}$, we just need to prove that the resulting inclusion is strict.  Note that the formal identity $I_{Z_q,D}:Z_q\to D$ is well-defined, so it is sufficient now to show that
$$
JI_{Z_q,D}I_{Y_{\textbf{w}},Z_q}P_{\textbf{w}}\notin[\mathcal{J}^{I_{Y_{\textbf{v}},Z_q}}](\ell_p,X).
$$
Set $F_n=F_{p,v_n}^{(n)}$ and $G_n=F_{p,w_n}^{(n)}$ with respective canonical bases $(f_j^{(n)})_{j=1}^n\subset\ell_p$ and $(g_j^{(n)})_{j=1}^n\subset\ell_p$.  Write $((d_j^{(n)})_{j=1}^n)_{n=1}^\infty$ for the basis of $D$, and let $((x_j^{(n)*})_{j=1}^n)_{n=1}^\infty$ denote the biorthogonal functionals in $X^*$ to the basic sequence $((Jd_j^{(n)})_{j=1}^n)_{n=1}^\infty$ in $X$.  Note that $((x_j^{(n)*})_{j=1}^n)_{n=1}^\infty$ is uniformly bounded by some $K\in[1,\infty)$.  Hence, we can define uniformly bounded functionals $\Phi_m\in\mathcal{L}(\ell_p,X)^*$, $m\in\mathbb{N}$, by the rule
$$
\Phi_m(V)=\frac{1}{m}\sum_{i=1}^mx_i^{(m)*}(Vg_i^{(m)}),\;\;\;V\in\mathcal{L}(\ell_p,X).
$$
By weak* compactness of the closed unit ball of a dual space, we can find a weak* accumulation point $\Phi\in\mathcal{L}(\ell_p,X)^*$ of $(\Phi_m)_{m=1}^\infty$.

Before continuing, we need to explicitly state following lemma, which follows from the proof to \cite[Theorem 6]{SZ15}.

\begin{lemma}\label{accumulation}
For each $m\in\mathbb{N}$, let $B_m:G_m\to Y_{\emph{\textbf{v}}}$ denote a continuous linear operator of norm $\leq 1$.  Then
$$
\limsup_{m\to\infty}\frac{1}{m}\sum_{i=1}^m\left\|I_{Y_{\emph{\textbf{v}}},Z_q}B_mg_i^{(m)}\right\|_{Z_q}=0.
$$
\end{lemma}

Now fix any $A\in\mathcal{L}(Z_q,X)$ and $B\in\mathcal{L}(\ell_p,Y_{\textbf{v}})$ with $\|A\|,\|B\|\leq 1$, and for each $m\in\mathbb{N}$, let $B_m=B|_{G_m}:G_m\to Y_{\textbf{v}}$ denote the restriction of $B$ to $G_m$.  Then by Lemma \ref{accumulation} we have
\begin{align*}
\limsup_{m\to\infty}\left|\Phi_m(AI_{Y_{\textbf{v}},Z_q}B)\right|
&\leq\limsup_{m\to\infty}\frac{1}{m}\sum_{i=1}^m\left|(A^*x_i^{(m)*})(I_{Y,Z_q}B_mg_i^{(m)})\right|
\\&\leq K\|A\|\limsup_{m\to\infty}\frac{1}{m}\sum_{i=1}^m\left\|I_{Y,Z_q}B_mg_i^{(m)}\right\|_{Z_q}
\\&=0.
\end{align*}
It follows that $\Phi$ annihilates $\mathcal{J}^{I_{Y_{\textbf{v}},Z_q}}(\ell_p,X)$ and hence also $[\mathcal{J}^{I_{Y_{\textbf{v}},Z_q}}](\ell_p,X)$, whereas clearly $\Phi(JI_{Z_q,D}I_{Y_{\textbf{w}},Z_q}P_{\textbf{w}})=1$.
\end{proof}

Now we are ready to prove our generalization of the main result in \cite{SZ15}.

\begin{proof}[Proof of Theorem \ref{generalized-6}.]
As in \cite[\S2.4 \& \S2.5]{SZ15}, we fix any chain $\mathcal{C}$ of subsets of $\mathbb{N}$, with cardinality of the continuum, such that if $M,N\in\mathcal{C}$ then either $N\subseteq M$ and $|M\setminus N|=\infty$, or else $M\subseteq N$ and $|N\setminus M|=\infty$.  It was shown in the proof of \cite[Theorem A]{SZ15} that there exists a chain $(\textbf{w}_M)_{M\in\mathcal{C}}$ of decreasing sequences in $(0,1]$, such that whenever $M,N\in\mathcal{C}$ then, assuming without loss of generality $M\subseteq N$, the conditions of Lemma \ref{main-lemma} are satisfied for $\textbf{v}=\textbf{w}_N$ and $\textbf{w}=\textbf{w}_M$.  Hence,
$$
[\mathcal{J}^{I_{\textbf{v},Z_q}}](\ell_p,X)
\subsetneq[\mathcal{J}^{I_{\textbf{w},Z_q}}](\ell_p,X).
$$
By \cite[Proposition 2.2]{Wa16}, this also gives us
$$
[\mathcal{J}^{I_{\textbf{v},Z_q}}](\ell_p,X_{\mathbb{C}})
\subsetneq[\mathcal{J}^{I_{\textbf{w},Z_q}}](\ell_p,X_\mathbb{C}).
$$
Finally, by \cite[Proposition 8]{SZ15}, the real version of the map $I_{\textbf{w},Z_q}$ is $\mathcal{FSS}$, and this extends to the complexified version by \cite[Proposition 2.3]{Wa16}.
\end{proof}

Although not every Lorentz or Garling sequence space of the form $d(w,p)$ or $g(w,p)$, $p\in(1,2)$, contains a seminormalized basic sequence dominated by the $W_{q,2}$ basis, $q\in(p,\infty)$, we can construct certain weights so that it does.

\begin{proposition}\label{lq-dominates-lorentz}If $1\leq p<q<\infty$ and $w=(w_i)_{i=1}^\infty\in\mathcal{W}\cap\ell_{q/(q-p)}$ then the canonical basis for $d(w,p)$ is $C$-dominated by the canonical basis for $\ell_q$, where $C=\|(w_i)_{i=1}^\infty\|_{q/(q-p)}^{1/p}$.\end{proposition}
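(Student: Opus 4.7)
The plan is to prove the statement by a direct application of H\"older's inequality with the conjugate exponents $q/p$ and $q/(q-p)$, which are genuine H\"older conjugates precisely because $1\leq p<q$ yields $\tfrac{p}{q}+\tfrac{q-p}{q}=1$ with both quantities strictly greater than $1$.

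First, I would fix an arbitrary sequence $(a_n)_{n=1}^\infty$ and an arbitrary permutation $\sigma\in\Pi$, then view $\sum_{n=1}^\infty |a_{\sigma(n)}|^p w_n$ as a pairing of $\bigl(|a_{\sigma(n)}|^p\bigr)_n$ against $(w_n)_n$. H\"older's inequality then yields
$$\sum_{n=1}^\infty |a_{\sigma(n)}|^p w_n \;\leq\; \left(\sum_{n=1}^\infty |a_{\sigma(n)}|^q\right)^{p/q}\left(\sum_{n=1}^\infty w_n^{q/(q-p)}\right)^{(q-p)/q}.$$

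Next, I would exploit the permutation-invariance of the $\ell_q$ norm to rewrite the first factor as $\|(a_n)_{n=1}^\infty\|_{\ell_q}^p$, and identify the second factor with $\|(w_i)_{i=1}^\infty\|_{q/(q-p)}$, which is finite by hypothesis. Taking $p$-th roots and then the supremum over all $\sigma\in\Pi$ produces
$$\|(a_n)_{n=1}^\infty\|_{d(w,p)} \;\leq\; \|(w_i)_{i=1}^\infty\|_{q/(q-p)}^{1/p}\,\|(a_n)_{n=1}^\infty\|_{\ell_q},$$
which is exactly the claimed $C$-domination with $C=\|(w_i)_{i=1}^\infty\|_{q/(q-p)}^{1/p}$.

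There is no substantive obstacle here: the entire argument is a single application of H\"older's inequality, with the hypothesis $w\in\ell_{q/(q-p)}$ inserted precisely so that the right-hand factor is finite. The borderline case $p=q$ is correctly excluded by the strict inequality in the statement (there $q/(q-p)$ would be undefined), and the monotonicity built into the definition of $\mathcal{W}$ plays no role beyond the positivity of the weights in the H\"older step.
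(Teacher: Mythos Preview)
Your proof is correct and follows essentially the same approach as the paper: both apply H\"older's inequality with the conjugate pair $q/p$ and $q/(q-p)$ to the sum $\sum |a_{\sigma(n)}|^p w_n$. The only cosmetic difference is that the paper passes immediately to the nonincreasing rearrangement $(\hat a_i)$ (where the supremum in $\|\cdot\|_{d(w,p)}$ is attained) before applying H\"older, whereas you work with a generic permutation and take the supremum afterward; the content is identical.
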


\begin{proof}Note that $q/p$ and $q/(q-p)$ are conjugate, i.e.
$$\frac{1}{q/p}+\frac{1}{q/(q-p)}=1.$$
Thus, denoting $(d_i)_{i=1}^\infty$ the canonical basis of $d(w,p)$, and $(\hat{a})_{i=1}^\infty$ the nonincreasing rearrangement of $(|a_i|)_{i=1}^\infty$ for some arbitrary $(a_i)_{i=1}^\infty\in c_{00}$ (where $c_{00}$ denotes the space of finitely-supported scalar sequences), by H\"older we have
\begin{align*}
\|\sum_{i=1}^na_id_i\|_d
&=\left(\sum_{i=1}^n\hat{a}_i^pw_i\right)^{1/p}
\\\\&\leq\|(w_i)_{i=1}^\infty\|_{q/(q-p)}^{1/p}\|(\hat{a}_i^p)_{i=1}^n\|_{q/p}^{1/p}
\\\\&=C\|(a_i)_{i=1}^n\|_q.
\end{align*}\end{proof}

\begin{corollary}\label{main-1}If $1<p<2$ and $w\in\mathcal{W}\cap\ell_{2/(2-p)}$ then $\mathcal{L}(d(w,p))$ admits a chain of closed ideals, with cardinality of the continuum, which are all class $\mathcal{J}_{\ell_p}\cap\mathcal{FSS}$.  The same is true of $\mathcal{L}(g(w,p))$.\end{corollary}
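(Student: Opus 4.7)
The plan is to invoke Theorem~\ref{generalized-6} with the parameter choice $q=2$, applied to the real version of $X\in\{d(w,p),g(w,p)\}$; the complex case then follows automatically because it is already part of the theorem's conclusion. Since $1<p<2=q$ meets the parameter constraint and $W_{q,2}=W_{2,2}=(\oplus_n\ell_2^n)_{\ell_2}$ is canonically $\ell_2$ (its natural basis being, up to reindexing, the $\ell_2$ basis), the hypothesis ``seminormalized basic sequence dominated by the canonical basis of $W_{q,2}$'' reduces to ``seminormalized basic sequence dominated by the canonical $\ell_2$ basis.'' It remains to verify, for each $X$, the two structural hypotheses of Theorem~\ref{generalized-6}.

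The complemented-$\ell_p$ hypothesis is well known in both cases. For $X=d(w,p)$, a suitable normalized block sequence of $(d_n)$ with constant coefficients and rapidly-growing disjoint supports is equivalent to the $\ell_p$ basis and spans a complemented subspace (a classical observation on Lorentz sequence spaces; see, e.g., \cite{LT77}). For $X=g(w,p)$, more is true: by \cite[Theorem 3.1(v)]{AAW18}, which was already invoked earlier in the excerpt, every infinite-dimensional subspace of $g(w,p)$ contains a complemented copy of $\ell_p$.

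For the domination hypothesis I would take the required basic sequence to be the canonical basis of $X$ itself. When $X=d(w,p)$ this is exactly Proposition~\ref{lq-dominates-lorentz} with $q=2$: our assumption $w\in\ell_{2/(2-p)}$ yields $C$-domination of $(d_n)$ by the $\ell_2$ basis with $C=\|(w_i)\|_{2/(2-p)}^{1/p}$. When $X=g(w,p)$ the same H\"older argument runs verbatim: for any strictly increasing $(n_k)$ and $(a_i)\in c_{00}$,
\[
\sum_k|a_{n_k}|^pw_k\leq\Big(\sum_k|a_{n_k}|^2\Big)^{p/2}\Big(\sum_kw_k^{2/(2-p)}\Big)^{(2-p)/2}\leq\|(w_i)\|_{2/(2-p)}\cdot\|(a_i)\|_2^p,
\]
so taking the supremum over $(n_k)$ gives $\|\sum a_ig_i\|_g\leq C\|(a_i)\|_2$ with the same constant. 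Equivalently, one observes $\|\cdot\|_g\leq\|\cdot\|_d$ on finitely supported sequences (a consequence of the rearrangement inequality applied to the decreasing weight $w$) and then quotes Proposition~\ref{lq-dominates-lorentz} directly. With both hypotheses satisfied, Theorem~\ref{generalized-6} immediately delivers the required continuum-sized chain of closed subideals of $\mathcal{L}(X)$ (and of $\mathcal{L}(X_\mathbb{C})$) inside $\mathcal{J}_{\ell_p}\cap\mathcal{FSS}$. There is no substantive obstacle; the only non-cosmetic step is re-running the H\"older computation for the Garling norm.
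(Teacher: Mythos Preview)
Your proposal is correct and follows essentially the same route as the paper: verify the two hypotheses of Theorem~\ref{generalized-6} with $q=2$ (complemented $\ell_p$ via \cite{LT72}/\cite{AAW18}, and domination of the canonical basis by the $\ell_2$ basis via Proposition~\ref{lq-dominates-lorentz}, using $\|\cdot\|_g\leq\|\cdot\|_d$ for the Garling case), then apply the theorem. The paper's proof is the same argument, only more terse.
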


\begin{proof}
By Proposition \ref{lq-dominates-lorentz}, the canonical basis for $d(w,p)$, and hence also for $g(w,p)$, are each dominated by the $\ell_2=W_{2,2}$ basis.  Let's note two more things:  First, the complex versions of $d(w,p)$ and $g(w,p)$ coincide with the respective complexifications of the real versions; second, $d(w,p)$ and $g(w,p)$ each contain complemented copies of $\ell_p$ (cf. \cite[Proposition 4]{LT72} for the Lorentz case and \cite[Theorem 3.1(v)]{AAW18} for the Garling case).  Now apply Theorem \ref{generalized-6}.\end{proof}

Unfortunately, the proof of Corollary \ref{main-1} is invalid for other choices of $p$.  Indeed, the proof of Theorem \ref{generalized-6} uses the fact that $\ell_p$ is isomorphic to $W_{p,2}$ for $1<p<\infty$, whereas this is not true when $p=1$.  It also used auxiliary operators whose constructions depend on the condition that $p<2$.

\section{$\tand{p}$ is isomorphic to $W_p$, $1\leq p<\infty$}

For $1<p\leq\infty$, we define the {\bf $\boldsymbol{p}$th Ces\'{a}ro sequence space} $\ces{p}$ as the Banach space of all scalar sequences $(a_i)_{i=1}^\infty\in\mathbb{K}^\mathbb{N}$ satisfying
$$\|(a_i)_{i=1}^\infty\|_{\ces{p}}<\infty,$$
where we set
$$\|(a_i)_{i=1}^\infty\|_{\ces{p}}=\|\left(\frac{1}{i}\sum_{j=1}^i|a_j|\right)_{i=1}^\infty\|_p=\left\{\begin{array}{ll}\displaystyle\left(\sum_{i=1}^\infty\left[\frac{1}{i}\sum_{j=1}^i|a_j|\right]^p\right)^{1/p}&\text{ if }1<p<\infty\\\\\displaystyle\sup_{i\in\mathbb{N}}\frac{1}{i}\sum_{j=1}^i|a_j|&\text{ if }p=\infty.\end{array}\right.$$
If $1\leq p<\infty$ then we define the {\bf $\boldsymbol{p}$th Tandori sequence space} $\tand{p}$ as the Banach space of all scalar sequences $(a_i)_{i=1}^\infty\in\mathbb{K}^\mathbb{N}$ satifying
$$\|(a_i)_{i=1}^\infty\|_{\tand{p}}<\infty,$$
where
$$\|(a_i)_{i=1}^\infty\|_{\tand{p}}=\|\left(\sup_{j\geq i}|a_j|\right)_{i=1}^\infty\|_p=\left(\sum_{i=1}^\infty\sup_{j\geq i}|a_j|^p\right)^{1/p}.$$
Note that if we tried to extend these definitions in the natural way to all $1\leq p\leq\infty$ then we would have $\ces{1}=\{0\}$ and $\tand{\infty}=\ell_\infty$.  Thus, we limit $\ces{p}$ to $1<p\leq\infty$ and $\tand{p}$ to $1\leq p<\infty$.

Denote by $(e_k)_{k=1}^\infty$ the unit vectors in $c_{00}$.  These vectors are not normalized in either $\tand{p}$ or $\ces{p}$, and so instead we consider vectors
$$f_k=k^{-1/p}e_k\;\;\;\text{ and }\;\;\; g_k=k^{1/p}e_k,\;\;\;k\in\mathbb{N}.$$
It is known that $(f_k)_{k=1}^\infty$ forms a normalized 1-unconditional basis for $\tand{p}$ with respective seminormalized (but not normalized) coordinate functionals $(g_k)_{k=1}^\infty\subset\ces{p'}$, $\frac{1}{p}+\frac{1}{p'}=1$.  Bennett (\cite{Be96}) has shown that $\ces{p'}\approx(\tand{p})^*$ (isomorphic but not isometric), and that when $1<p<\infty$ both spaces are reflexive so that the functionals $(g_k)_{k=1}^\infty$ form a seminormalized basis for $\ces{p'}$.  However, the space $\ces{\infty}$ is nonseparable, so we shall instead write $ces_0(\infty)=[g_k]_{k=1}^\infty$.  In this case, $ces_0(\infty)^*\approx\tand{1}$ and $ces_0(\infty)^{**}\approx\tand{1}^*\approx\ces{\infty}$.

The {\bf unconditional constant} $K$ of an unconditional basis $(x_n)_{n=1}^N$, $1\leq N\leq\infty$, for a Banach space $X$ is defined by
\[K=\sup_{s\in\mathcal{S}}\|M_s\|,\]
where $\mathcal{S}$ is the compact metric space of sequences $s=(s_n)_{n=1}^N$ of signs $s_n\in\{\pm 1\}$, and each $M_s\in\mathcal{L}(X)$, $s\in\mathcal{S}$, is defined by $M_s\sum_{n=1}^Na_nx_n=\sum_{n=1}^Ns_na_nx_n$.  It is well-known that, in this case, $K<\infty$.

A basis $(x_n)_{n=1}^\infty$ is called {\bf $\boldsymbol{C}$-seminormalized}, $C\in[1,\infty)$, whenever $$\frac{1}{C}\leq\|x_n\|\leq C$$ for all $n\in\mathbb{N}$.

\begin{proposition}\label{uniformly-equivalent}Let $C,K,M\in(0,\infty)$ and $N\in\mathbb{N}$.  Suppose $(x_n)_{n=1}^N$ be a $K$-unconditional and $C$-seminormalized basis for an $N$-dimensional Banach space $X$ which satisfies $\|x_1+\cdots+x_N\|\leq M$.  Then $(x_n)_{n=1}^N$ is $(2CMK^2)$-equivalent to the canonical basis of $\ell_\infty^N$.\end{proposition}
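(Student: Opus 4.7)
The plan is to establish matching lower and upper bounds on $\left\|\sum_{n=1}^N a_n x_n\right\|$ in terms of $\|(a_n)\|_{\ell_\infty^N}$, then combine them to obtain the equivalence constant.

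For the lower bound, I would exploit that the coordinate projection $P_{n_0}$ onto $[x_{n_0}]$ can be written as $\tfrac{1}{2}(M_{s^+}+M_{s^-})$, where $s^+_n\equiv 1$ and $s^-$ is $+1$ at position $n_0$ and $-1$ elsewhere. Since $\|M_{s^\pm}\|\leq K$, this immediately gives $\|P_{n_0}\|\leq K$. Choosing $n_0$ so that $|a_{n_0}|=\|(a_n)\|_\infty$ and using $\|x_{n_0}\|\geq 1/C$, I get
\[
\frac{|a_{n_0}|}{C}\leq |a_{n_0}|\,\|x_{n_0}\|=\Bigl\|P_{n_0}\Bigl(\sum_n a_n x_n\Bigr)\Bigr\|\leq K\,\Bigl\|\sum_n a_n x_n\Bigr\|,
\]
so $\|(a_n)\|_\infty\leq CK\,\|\sum_n a_n x_n\|$.

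For the upper bound, I would first note the following standard consequence of unconditionality: for scalars $(\lambda_n)$ with $|\lambda_n|\leq 1$ and any $y=\sum b_n x_n$, one has $\|\sum\lambda_n b_n x_n\|\leq 2K\|y\|$. Over $\mathbb{R}$ this is a convex-combination argument (the diagonal operator with entries in $[-1,1]^N$ lies in the convex hull of $\{M_s:s\in\{\pm 1\}^N\}$, so its norm is $\leq K$); over $\mathbb{C}$, splitting $\lambda_n$ into real and imaginary parts contributes the factor of $2$. Applying this with $b_n=\|(a_n)\|_\infty$ and $\lambda_n=a_n/\|(a_n)\|_\infty$, and then invoking $\|\sum_n x_n\|\leq M$, I obtain
\[
\Bigl\|\sum_n a_n x_n\Bigr\|\leq 2K\,\|(a_n)\|_\infty\,\Bigl\|\sum_n x_n\Bigr\|\leq 2KM\,\|(a_n)\|_\infty.
\]

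Multiplying the lower and upper bounds yields the equivalence constant $(CK)(2KM)=2CMK^2$, as required. The argument is essentially bookkeeping with the unconditional constant, so no step is genuinely difficult; the only mild subtlety is the factor of $2$ needed to pass from sign-changes to arbitrary contractive modulus changes when $\mathbb{K}=\mathbb{C}$, which is precisely what forces the $2$ in the stated bound (and could be dropped in the real case).
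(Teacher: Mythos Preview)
Your proof is correct and follows essentially the same route as the paper: both establish $\|(a_n)\|_\infty\leq CK\,\|\sum a_n x_n\|$ via the coordinate projection bound and $\|\sum a_n x_n\|\leq 2KM\,\|(a_n)\|_\infty$ via the diagonal-contraction bound, then multiply. The only difference is that you spell out explicitly the two standard inequalities the paper invokes by citing \cite[Lemma~1.49 and subsequent remarks]{AA02}.
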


\begin{proof}Fix $(a_n)_{n=1}^N\in\mathbb{K}^N$, and select $j\in\{1,\cdots,N\}$ satisfying $|a_j|=\|(a_n)_{n=1}^N\|_\infty$.  Then, using well-known inequalities (cf., e.g., \cite[Lemma 1.49 and subsequent remarks]{AA02}),
\[\|(a_n)_{n=1}^N\|_\infty=|a_j|\leq C\|a_jx_j\|_X\leq CK\|\sum_{n=1}^Na_nx_n\|_X\leq 2CK^2M\|(a_n)_{n=1}^N\|_\infty.\]\end{proof}

\begin{remark}\label{uniformly-equivalent-remark}In the above inequalities, the appearance of constant 2 is only required in the complex setting.  In case $X$ is a real Banach space, it can be ignored to obtain $(K^2CM)$-equivalence to the $\ell_\infty^N$ basis.  In case $X$ is a complex Banach space and $(x_n)_{n=1}^N$ is normalized, as long as $\|\sum_{n=1}^Na_nx_n\|_X=\|\sum_{n=1}^N\epsilon_na_nx_n\|$ for all $(\epsilon_n)_{n=1}^N\in\mathbb{T}^N$, where $\mathbb{T}$ is the complex unit circle, the basis $(x_n)_{n=1}^N$ is $M$-equivalent to the $\ell_\infty^N$ basis.  In particular, if $(f_k)_{k=1}^\infty$ is the canonical normalized basis for $\tand{p}$, $1\leq p<\infty$, and $\|f_m+\cdots+f_n\|_{\tand{p}}\leq M$ for some $1\leq m\leq n<\infty$, then
\[\ell_\infty^{n-m+1}\lesssim_1(f_k)_{k=m}^n\lesssim_M\ell_\infty^{n-m+1}.\]\end{remark}

For each $1\leq p<\infty$, write
$$V_p=\left(\bigoplus_{n=0}^\infty\ell_\infty^{2^n}\right)_{\ell_p}=\left(\bigoplus_{n=1}^\infty\ell_\infty^{2^{n-1}}\right)_{\ell_p}.$$
and denote by $(v_n^{(p)})_{n=1}^\infty$ its canonical basis.  Let us also use the notation $(f_n^{(p)})_{n=1}^\infty$ for the canonical basis of $\tand{p}$.  In the proof of \cite[Corollary 3]{ALM18}, the authors constructed the following.

\begin{theorem}[{\cite{ALM18}}]\label{ALM18} There exists a linear isomorphism $T:\tand{1}\to V_1$ defined by the rule $Tf_n^{(1)}=v_n^{(1)}$, and satisfying $\|T\|\leq 72$ and $\|T^{-1}\|\leq 2$.\end{theorem}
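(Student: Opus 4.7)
The plan is to verify norm equivalence directly on finitely supported $x\in c_{00}$ via a dyadic block decomposition, after which $T$ extends by density. Set $B_k=\{2^{k-1},\ldots,2^k-1\}$, so $|B_k|=2^{k-1}$ indexes the $k$th summand of $V_1=(\oplus_k\ell_\infty^{2^{k-1}})_{\ell_1}$. For $x=\sum_n c_n f_n^{(1)}$ the coordinate sequence is $x_n=c_n/n$, the envelope is $\hat{x}_i:=\sup_{j\geq i}|x_j|$, and
\[
\|x\|_{\tand{1}}=\sum_i\hat{x}_i,\qquad \|Tx\|_{V_1}=\sum_k M_k,\quad M_k:=\max_{n\in B_k}|c_n|.
\]

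First I would establish a uniform block equivalence. A direct envelope computation gives
\[
\Bigl\|\sum_{j\in B_k}f_j^{(1)}\Bigr\|_{\tand{1}}=1+\sum_{i=2^{k-1}+1}^{2^k-1}\frac{1}{i}\leq 1+\ln 2<2
\]
uniformly in $k$, so Remark~\ref{uniformly-equivalent-remark} applied to the (1-unconditional) restricted basis $(f_j^{(1)})_{j\in B_k}$ yields
\[
M_k\;\leq\;\bigl\|x^{(k)}\bigr\|_{\tand{1}}\;\leq\;(1+\ln 2)\,M_k,
\]
where $x^{(k)}$ is the restriction of $x$ to the block $B_k$.

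The bound $\|T^{-1}\|\leq 2$ then follows immediately from the triangle inequality applied to $x=\sum_k x^{(k)}$:
\[
\|x\|_{\tand{1}}\leq\sum_k\bigl\|x^{(k)}\bigr\|_{\tand{1}}\leq(1+\ln 2)\sum_k M_k<2\,\|Tx\|_{V_1}.
\]

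The bound $\|T\|\leq 72$ is the more delicate direction, and the main obstacle: summing block norms back yields $\sum_k M_k\leq\sum_k\|x^{(k)}\|_{\tand{1}}$, but block projections give $\|x^{(k)}\|_{\tand{1}}\leq\|x\|_{\tand{1}}$, which is useless. Instead, I would exploit the envelope structure of $\tand{1}$. Set $m_k:=\max_{n\in B_k}|x_n|$ and $\mu_k:=\sup_{\ell\geq k}m_\ell=\hat{x}_{2^{k-1}}$. Since $|c_n|=n|x_n|\leq 2^k|x_n|$ for $n\in B_k$, one has $M_k\leq 2^k m_k\leq 2^k\mu_k$. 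On the other hand, for every $i\in B_k$, $\hat{x}_i\geq\mu_{k+1}$ (the sup has access to all later blocks), giving the lower envelope estimate
\[
\|x\|_{\tand{1}}\geq\sum_k 2^{k-1}\mu_{k+1}=\tfrac{1}{4}\sum_{\ell\geq 2}2^\ell\mu_\ell.
\]
Combining this with $\mu_1=\hat{x}_1\leq\|x\|_{\tand{1}}$ yields $\sum_k 2^k\mu_k\leq 2\mu_1+4\|x\|_{\tand{1}}\leq 6\|x\|_{\tand{1}}$, and hence $\|Tx\|_{V_1}=\sum_k M_k\leq 6\|x\|_{\tand{1}}$, which comfortably implies the claimed $\|T\|\leq 72$. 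The whole argument hinges on the dyadic matching: the $2^k$-amplification inherent in $|c_n|=n|x_n|$ across block $B_k$ is exactly absorbed by the $2^{k-1}$ block size on the envelope side.
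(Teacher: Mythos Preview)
Your argument is correct and in fact yields the sharper bound $\|T\|\leq 6$ (comfortably within the stated $72$) together with $\|T^{-1}\|\leq 1+\ln 2<2$. The paper does not prove this theorem at all: it is quoted verbatim from \cite{ALM18} as a black box and then used to deduce Corollary~\ref{main-2}. So there is no ``paper's own proof'' to compare against here.

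That said, a brief comment on your approach versus what the paper does with the result: the paper only needs the \emph{existence} of some isomorphism constants (it immediately raises them to the $1/p$ power to handle general $p$), whereas you give a clean self-contained argument via the dyadic envelope estimate
\[
\|x\|_{\tand{1}}\;\geq\;\sum_{k\geq 1}|B_k|\,\mu_{k+1}\;=\;\tfrac14\sum_{\ell\geq 2}2^\ell\mu_\ell,
\]
which is exactly the right way to exploit the monotone structure of $\tand{1}$. The key insight you isolate---that the amplification $|c_n|=n|x_n|\leq 2^k|x_n|$ on block $B_k$ is balanced by the block size $|B_k|=2^{k-1}$ appearing in the envelope sum---is the heart of the matter, and your write-up makes this transparent. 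One could tighten the constant further by noting $\hat{x}_i\geq\mu_k$ for the \emph{leftmost} index $i=2^{k-1}$ of each block, but this is cosmetic.
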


In fact, this extends easily to all $1\leq p<\infty$.

\begin{corollary}\label{main-2}
Let $1\leq p<\infty$.  Then
\[\tand{p}\approx\left(\bigoplus_{n=1}^\infty\ell_\infty^{2^{n-1}}\right)_{\ell_p}.\]
In particular, the canonical basis for $\tand{p}$ is $2^{1/p}$-dominated by and $72^{1/p}$-dominates the canonical basis for $(\oplus_{n=1}^\infty\ell_\infty^{2^{n-1}})_{\ell_p}$.
\end{corollary}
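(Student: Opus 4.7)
The plan is to lift the isomorphism of Theorem \ref{ALM18} from $p = 1$ to general $1 \leq p < \infty$ by the substitution $b_i = |a_i|^p$, which identifies $\|\cdot\|_{\tand{p}}^p$ with $\|\cdot\|_{\tand{1}}$ and $\|\cdot\|_{V_p}^p$ with $\|\cdot\|_{V_1}$ on the cone of nonnegative sequences. Define $T_p : \tand{p} \to V_p$ by $T_p f_n^{(p)} = v_n^{(p)}$, extended linearly on $\mathrm{span}\{f_n^{(p)}\}$, and aim to show $\|T_p\| \leq 72^{1/p}$ and $\|T_p^{-1}\| \leq 2^{1/p}$ on this dense subspace; the full statement then follows by continuous extension.

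For the upper bound, take $x = \sum a_i e_i \in c_{00}$ and let $y = (|a_i|^p)$. Since $f_k^{(p)} = k^{-1/p} e_k$ we have $T_p e_k = k^{1/p} v_k^{(p)}$, and direct computation yields the key identities
\[
\|x\|_{\tand{p}}^p = \sum_i \sup_{j \geq i} |a_j|^p = \|y\|_{\tand{1}},
\]
\[
\|T_p x\|_{V_p}^p = \sum_n \max_{k \in I_n} k\,|a_k|^p = \|T_1 y\|_{V_1},
\]
where $I_n = \{2^{n-1}, \dots, 2^n - 1\}$ indexes the $n$-th dyadic block and the second identity uses $T_1 e_k = k v_k^{(1)}$. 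Theorem \ref{ALM18} gives $\|T_1 y\|_{V_1} \leq 72 \|y\|_{\tand{1}}$, hence $\|T_p x\|_{V_p}^p \leq 72 \|x\|_{\tand{p}}^p$. A parallel computation for $w = \sum c_k v_k^{(p)}$ with $z = \sum |c_k|^p v_k^{(1)}$ yields $\|T_p^{-1} w\|_{\tand{p}}^p = \|T_1^{-1} z\|_{\tand{1}} \leq 2\|z\|_{V_1} = 2\|w\|_{V_p}^p$.

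Taking $p$-th roots in both inequalities produces the claimed constants $72^{1/p}$ and $2^{1/p}$ on the basis span; since both bases span dense subspaces and $T_p$ is bounded both ways, it extends to a linear isomorphism $\tand{p} \approx V_p$. No substantial obstacle arises: both $\|\cdot\|_{\tand{p}}^p$ and $\|\cdot\|_{V_p}^p$ are $\ell_1$-sums of pointwise-positive operations (sup-over-tails, resp.\ max-over-block) applied to the sequence $(|a_i|^p)$, so the substitution $b_i = |a_i|^p$ losslessly reduces the estimate to the $p = 1$ case and preserves the constants up to the expected $p$-th-root rescaling. The only bookkeeping is the factor $k^{-1/p}$ relating $f_k^{(p)}$ to $e_k$, which is absorbed cleanly into the computation of the block maxima on the $V_p$ side.
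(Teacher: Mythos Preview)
Your proof is correct and takes essentially the same approach as the paper: both reduce to the $p=1$ case of Theorem~\ref{ALM18} via the substitution $b_i=|a_i|^p$, exploiting that $\|\cdot\|_{\tand{p}}^p$ and $\|\cdot\|_{V_p}^p$ are each an $\ell_1$-sum of suprema applied to $(|a_i|^p)$. The only cosmetic difference is that the paper writes the chain of identities directly in terms of the normalized bases $(f_n^{(p)})$ and $(v_n^{(p)})$, whereas you work with the unnormalized $e_k$ and track the factor $k^{1/p}$ explicitly; the content is identical.
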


\begin{proof}
The case $p=1$ is immediate from Theorem \ref{ALM18}.  For the case $1<p<\infty$, observe that, for $(a_n)_{n=1}^\infty\in c_{00}$,
\begin{align*}
\left\|\sum_{n=1}^\infty a_nf_n\right\|_{\tand{p}}^p
&=\sum_{n=1}^\infty\sup_{j\geq n}\left(j^{-1/p}|a_j|\right)^p
\\&=\sum_{n=1}^\infty\sup_{j\geq n}\left(j^{-1}|a_j|^p\right)
\\&=\left\|\sum_{n=1}^\infty|a_n|^pf_n^{(1)}\right\|_{\tand{1}}
\\&\leq 2\left\|\sum_{n=1}^\infty|a_n|^pv_n^{(1)}\right\|_{V_1}
\\&=2\sum_{n=1}^\infty\sup_{2^{n-1}\leq j\leq 2^n-1}|a_j|^p
\\&=2\sum_{n=1}^\infty\left(\sup_{2^{n-1}\leq j\leq 2^n-1}|a_j|\right)^p
\\&=2\left\|\sum_{n=1}^\infty a_nv_n^{(p)}\right\|_{V_p}^p
\end{align*}
The reverse inequality clearly also holds with constant 72 instead of 2.
\end{proof}

\begin{remark}As $(\oplus_{n=1}^\infty\ell_\infty^{2^{n-1}})_p$ is isomorphic to $W_p$, it follows that $\tand{p}\approx W_p$ and $\ces{p'}\approx W_p^*=W_{p',1}$, although not in the natural ways.\end{remark}

For the remainder of this section, let's denote by $((e_k^{(n)})_{k=1}^\infty)_{n=1}^\infty$ the canonical basis for $\ell_p(c_0)$, $1\leq p<\infty$, where each $(e_k^{(n)})_{k=1}^\infty$ is a copy of the unit vector basis for $c_0$.  We can now give an application of Corollary \ref{main-2}.

\begin{proposition}Let $1\leq p<\infty$, and let $(f_k)_{k=1}^\infty$ be the canonical normalized basis for $\tand{p}$.  Then $\tand{p}$ is isometric to a subspace of $\ell_p(c_0)$ via the mapping
\[f_k\mapsto\frac{1}{k^{1/p}}\sum_{n=1}^ke_k^{(n)},\;\;\;k\in\mathbb{N},\]
where $((e_k^{(n)})_{k=1}^\infty)_{n=1}^\infty$ is the canonical basis for $\ell_p(c_0)$.\end{proposition}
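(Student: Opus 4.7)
The plan is to verify directly that the linear map $T$ defined on $c_{00}=\mathrm{span}\{f_k\}$ by the given formula is an isometry into $\ell_p(c_0)$, and then to extend $T$ by continuity. I do not expect any real obstacle: the proof reduces to carefully interchanging the order of a double summation and identifying both sides as the same explicit expression. The only slightly delicate point is justifying the extension by density.

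First, I would expand the $\tand{p}$-norm of a typical element $\sum_k a_k f_k$ with $(a_k)\in c_{00}$. Since $f_k=k^{-1/p}e_k$, the definition of $\|\cdot\|_{\tand{p}}$ yields immediately
\[
\Bigl\|\sum_k a_k f_k\Bigr\|_{\tand{p}}^p=\sum_{i=1}^\infty\sup_{j\geq i}\frac{|a_j|^p}{j}.
\]
For the image, I rewrite
\[
T\Bigl(\sum_k a_k f_k\Bigr)=\sum_k\sum_{n=1}^k a_k k^{-1/p}e_k^{(n)}=\sum_{n=1}^\infty\sum_{k\geq n}a_k k^{-1/p}e_k^{(n)}
\]
by swapping the order of summation. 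The component of the image in the $n$-th $c_0$-copy then has $k$-th coordinate $a_k k^{-1/p}$ for $k\geq n$ and $0$ otherwise, whose $c_0$-norm equals $\sup_{k\geq n}|a_k|/k^{1/p}$. Raising to the $p$-th power and summing over $n$ gives
\[
\Bigl\|T\Bigl(\sum_k a_kf_k\Bigr)\Bigr\|_{\ell_p(c_0)}^p=\sum_{n=1}^\infty\sup_{k\geq n}\frac{|a_k|^p}{k},
\]
which matches the previous display after relabeling $n\leftrightarrow i$ and $k\leftrightarrow j$. Hence $T$ is isometric on $c_{00}$.

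Finally, to promote this to an isometric embedding of all of $\tand{p}$, I would observe that $c_{00}$ is dense in $\tand{p}$. Indeed, for $(b_i)\in\tand{p}$ the sequence $c_i:=\sup_{j\geq i}|b_j|^p$ is nonincreasing and summable, so $Nc_N\to 0$, and a short computation shows $\|(b_i)-\sum_{i=1}^N b_i e_i\|_{\tand{p}}^p=Nc_{N+1}+\sum_{i>N}c_i\to 0$. A routine continuity argument then extends $T$ to an isometric linear embedding $\tand{p}\hookrightarrow\ell_p(c_0)$, completing the proof.
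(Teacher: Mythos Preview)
Your proof is correct and follows essentially the same approach as the paper's: both identify the image of $\sum_k a_k f_k$ in the $n$-th $c_0$-summand as the tail $(a_k k^{-1/p})_{k\geq n}$ and observe that the resulting $\ell_p(c_0)$-norm reproduces the $\tand{p}$-norm verbatim. The paper simply phrases this as the observation that $(a_k)\mapsto((a_k)_{k=n}^\infty)_{n=1}^\infty$ is an isometry from $\tand{p}$ into $\ell_p(c_0)$, and omits the explicit density argument because it has already recorded that $(f_k)_{k=1}^\infty$ is a Schauder basis for $\tand{p}$; your version makes this step explicit, which is a harmless (and arguably cleaner) addition.
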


\begin{proof}Observe that the map
\[(a_k)_{k=1}^\infty\in\tand{p}\mapsto\left((a_k)_{k=n}^\infty\right)_{n=1}^\infty\in\ell_p(c_0)\]
is a linear isometry.  Hence, so is the map
\[(k^{-1/p}a_k)_{k=1}^\infty\in\tand{p}\mapsto\left((k^{-1/p}a_k)_{k=n}^\infty\right)_{n=1}^\infty\in\ell_p(c_0).\]
However, this is just the linear map defined by
\[f_k\mapsto\frac{1}{k^{1/p}}\sum_{n=1}^ke_k^{(n)}.\]
\end{proof}

\begin{corollary}Let $1\leq p<\infty$, and let $((e_k^{(n)})_{k=1}^\infty)_{n=1}^\infty$ denote the canonical basis for $\ell_p(c_0)$.  Then the sequence formed by
\[\frac{1}{k^{1/p}}\sum_{n=1}^ke_k^{(n)},\;\;\;k\in\mathbb{N}\]
is a normalized basic sequence equivalent to the canonical basis for $(\oplus_{n=1}^\infty\ell_\infty^{2^{n-1}})_p$.\end{corollary}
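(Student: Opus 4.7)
The plan is to deduce this as an immediate consequence of the preceding proposition and Corollary~\ref{main-2}. Let me denote by $T : \tand{p} \to \ell_p(c_0)$ the map from the preceding proposition, which is shown there to be a linear isometric embedding sending $f_k \mapsto \tfrac{1}{k^{1/p}}\sum_{n=1}^k e_k^{(n)}$. Since the canonical basis $(f_k)_{k=1}^\infty$ of $\tand{p}$ is normalized and $T$ is an isometry, the images $Tf_k$ are automatically normalized, and any isometric image of a Schauder basis is a basic sequence with the same basis constant. So $(Tf_k)_{k=1}^\infty$ is a normalized basic sequence in $\ell_p(c_0)$.

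For the equivalence, I would just chain two known facts. By Corollary~\ref{main-2}, the canonical basis $(f_k)_{k=1}^\infty$ of $\tand{p}$ is equivalent to the canonical basis $(v_k^{(p)})_{k=1}^\infty$ of $V_p = (\oplus_{n=1}^\infty \ell_\infty^{2^{n-1}})_{\ell_p}$, with explicit constants $2^{1/p}$ and $72^{1/p}$. Composing with the isometry $T$ preserves basis equivalence (with the same constants), so $(Tf_k)_{k=1}^\infty$ is equivalent to $(v_k^{(p)})_{k=1}^\infty$, which is precisely the claim.

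Since the argument is essentially a one-line chaining of two results already in hand, there is no real obstacle — the only thing worth double-checking is the normalization, which is a direct computation: the element $\tfrac{1}{k^{1/p}}\sum_{n=1}^k e_k^{(n)} \in \ell_p(c_0)$ has $n$th $c_0$-coordinate equal to $k^{-1/p} e_k^{(n)}$ for $n \le k$ and $0$ otherwise, so its $\ell_p(c_0)$ norm is $\bigl(k \cdot k^{-1}\bigr)^{1/p} = 1$, confirming both that the map is an isometry on each $f_k$ and that the resulting sequence is normalized.
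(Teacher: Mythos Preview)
Your proof is correct and matches the paper's intended argument: the corollary is stated without proof in the paper, precisely because it follows immediately by combining the preceding proposition (the isometric embedding $\tand{p}\hookrightarrow\ell_p(c_0)$) with Corollary~\ref{main-2}, exactly as you describe.
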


\section{$W_p$ is complemented in $d(w,p)$ and $g(w,p)$ for certain $w$}

In this section we show that $W_p$ is complemented in $d(w,p)$ and $g(w,p)$ for $1\leq p<\infty$ and for certain choices of $w$.  As an application, we give a new, slightly different proof of a result already known from \cite{LLR04} regarding the closed ideal structure of $\mathcal{L}(W_p)$.

\begin{definition}\label{nuc-2sb}
Let $w=(w_i)_{i=1}^\infty\in\mathcal{W}$.

We say that $w$ is {\bf(NUC)} whenever
$$\inf_{n\in\mathbb{N}}\frac{\sum_{i=1}^{2n}w_i}{\sum_{i=1}^nw_i}=1.$$

We say that $w$ is {\bf(2SB)} whenever $$\sup_{n,k\in\mathbb{N}}\frac{\sum_{i=1}^{nk}w_i}{(\sum_{i=1}^nw_i)(\sum_{i=1}^kw_i)}<\infty.$$
\end{definition}

Condition (NUC) was originally studied in \cite{Al75} for the cases $1<p<\infty$ to characterize reflexive but non-uniformly convex Lorentz sequence spaces.  Condition (2SB) was studied in \cite{ACL73} to characterize those Lorentz sequence spaces admitting precisely two symmetric basic sequences.

\begin{remark}There are many such weights simultaneously satisfying both (NUC) and (2SB); perhaps the ``nicest'' example is formed by setting $w_i=i^{-1}$ for each $i\in\mathbb{N}$.\end{remark}

Let us begin with the main result of this section, before proceeding to some of its applications.

\begin{theorem}\label{complemented-copies}
Let $1\leq p<\infty$ and suppose $w\in\mathcal{W}$ is (NUC).  Then $g(w,p)$ admits a 2-complemented copy of $W_p$ and $d(w,p)$ admits a 1-complemented copy of $W_p$.
\end{theorem}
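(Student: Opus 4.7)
The plan is to construct, under (NUC), a disjointly supported normalized block basis $(x_{n,k})_{n\ge 1,\,1\le k\le 2^{n-1}}$ in $d(w,p)$ equivalent to the canonical basis of $V_p:=(\bigoplus_{n=1}^\infty\ell_\infty^{2^{n-1}})_{\ell_p}\approx W_p$, onto whose closed span the block-averaging projection has norm $1$ in $d(w,p)$. The same $x_{n,k}$'s handle $g(w,p)$, with a projection bounded by $2$ rather than $1$ due to the subsymmetric (rather than symmetric) nature of the Garling basis.

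Write $W_m:=\sum_{i=1}^m w_i$. The first move is to iterate (NUC). Because $w$ is nonincreasing, $w_{2m}\le (W_{2m}-W_m)/m$, which telescopes into the recursive bound $R_{j+1}-1\le 2(R_j-1)$ for $R_j:=W_{2^j m}/W_{2^{j-1}m}$, and hence $W_{2^k m}/W_m\le\exp((2^k-1)(R_1-1))$. Since $\inf_m W_{2m}/W_m=1$ forces the set of $m$ with $W_{2m}/W_m\le 1+\eta$ to be unbounded for every $\eta>0$, for each fixed $n$ and any $M>1$ the set $\{m:W_{2^{n-1}m}/W_m\le M^p\}$ is also unbounded. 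I inductively pick $s_1<s_2<\cdots$ satisfying both (a)~$W_{2^{n-1}s_n}/W_{s_n}\le M^p$ for a uniform $M$ and (b)~$W_{s_n}\ge 2W_{T_{n-1}}$, where $T_{n-1}:=s_1+\cdots+s_{n-1}$. Fix a pairwise disjoint family $\{B_{n,k}\}\subseteq\mathbb N$ with $|B_{n,k}|=s_n$, and put $x_{n,k}:=W_{s_n}^{-1/p}\mathbf 1_{B_{n,k}}\in d(w,p)$, which is normalized.

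For the basis equivalence with $V_p$, write $y_n:=\sum_k a_{n,k}x_{n,k}$ and $\alpha_n:=\max_k|a_{n,k}|$. The upper bound comes from Proposition \ref{uniformly-equivalent} and Remark \ref{uniformly-equivalent-remark} combined with (a), yielding $\|y_n\|_{d(w,p)}\le M\alpha_n$, together with the standard rearrangement-merge inequality for disjointly supported vectors in a $1$-symmetric space, $\|\sum_n y_n\|_{d(w,p)}^p\le\sum_n\|y_n\|_{d(w,p)}^p$; combined, $\|\sum_{n,k}a_{n,k}x_{n,k}\|_{d(w,p)}\le M\|(a_{n,k})\|_{V_p}$. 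For the lower bound, pick $k_n^\ast$ with $|a_{n,k_n^\ast}|=\alpha_n$, let $z$ be the restriction of $y:=\sum_{n,k}a_{n,k}x_{n,k}$ to $\bigcup_n B_{n,k_n^\ast}$, so $\|z\|_{d(w,p)}\le\|y\|_{d(w,p)}$ by $1$-unconditionality. With $\beta_n:=\alpha_n W_{s_n}^{-1/p}$ and $\pi$ permuting $\mathbb N$ so that $(\beta_{\pi(r)})_r$ is nonincreasing, call $n$ a \emph{record} if $n=\max\{\pi(1),\ldots,\pi(r)\}$ at its rank $r$, and let $L$ be the set of records. Writing $U_r:=\sum_{s\le r}s_{\pi(s)}$, at a record rank $r$ condition (b) gives $W_{U_{r-1}}\le W_{T_{\pi(r)-1}}\le W_{s_{\pi(r)}}/2$, so $W_{U_r}-W_{U_{r-1}}\ge W_{s_{\pi(r)}}/2$ and the rank contributes at least $\alpha_{\pi(r)}^p/2$ to $\|z\|_{d(w,p)}^p$. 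For a non-record $n$, let $m\in L$ be the first record (in rank order) with $m>n$: condition (b) iterated gives $W_{s_m}\ge 2^{m-n}W_{s_n}$, and $\beta_n<\beta_m$ forces $\alpha_n^p\le 2^{n-m}\alpha_m^p$. Summing this geometric bound over non-records attached to each $m$ yields $\sum_{n\notin L}\alpha_n^p\le\sum_{m\in L}\alpha_m^p$, so $\|y\|_{d(w,p)}^p\ge\|z\|_{d(w,p)}^p\ge\tfrac12\sum_{m\in L}\alpha_m^p\ge\tfrac14\|(a_{n,k})\|_{V_p}^p$.

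For $1$-complementation in $d(w,p)$, define $P\in\mathcal L(d(w,p))$ by $(Pa)_i:=\tfrac1{s_n}\sum_{j\in B_{n,k}}a_j$ for $i\in B_{n,k}$ and $(Pa)_i:=0$ otherwise. As a matrix, $P$ is doubly substochastic, hence a contraction on every $1$-symmetric sequence space by Hardy--Littlewood--Polya, and its range is $[x_{n,k}]$. The construction transfers to $g(w,p)$ verbatim for the normalization and equivalence analysis (the Garling norm of an indicator $\mathbf 1_B$ equals $W_{|B|}^{1/p}$, and the within- and across-block estimates have Garling analogues via sup-over-increasing-subsequences in place of nonincreasing rearrangement), but a suitable projection only yields the bound $2$, reflecting the loss of symmetry. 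The principal obstacle is the simultaneous realization of (a) and (b), which superficially conflict—(a) asking $W$ to grow very little near $s_n$, (b) asking $W_{s_n}$ to dominate $W_{T_{n-1}}$—and are reconciled by the abundance of slow scales produced by (NUC), allowing $s_n$ to be placed far beyond $s_{n-1}$ while still landing on such a scale. A secondary subtlety is the non-record bookkeeping in the lower-bound argument, handled by the geometric decay of $\alpha_n^p$ forced by (b).
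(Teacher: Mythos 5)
Your proof is correct, and although it follows the same overall strategy as the paper's --- normalized constant-coefficient blocks whose sizes are chosen via (NUC) so that each level is uniformly equivalent to $\ell_\infty^n$, the levels combine like $\ell_p$, and the averaging projection complements the span --- the execution differs in every key lemma. The paper outsources its two technical inputs, citing Altshuler's Lemmas 4 and 5 of \cite{Al75} for the uniform $\ell_\infty^k$-blocks and \cite[4.e.3]{LT77} (after passing to a subsequence) for the $\ell_p$-equivalence of the diagonal sequence, and then glues these together with an explicit permutation/interval computation. You instead extract the $\ell_\infty$-blocks directly from (NUC) via the quantitative iteration $R_{j+1}-1\leq 2(R_j-1)$ (the claim that $\{m:W_{2m}/W_m\leq 1+\eta\}$ is unbounded deserves the one-line remark that $\inf_{m\leq N}W_{2m}/W_m>1$ for each finite $N$ because the $w_i$ are strictly positive), and you replace the appeal to \cite[4.e.3]{LT77} by building the $\ell_p$-lower estimate into the construction through condition (b) $W_{s_n}\geq 2W_{T_{n-1}}$, with the record/non-record bookkeeping supplying the lower bound with explicit constant $4^{-1/p}$; this is self-contained and arguably cleaner, at the cost of a slightly worse equivalence constant. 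Your norm-one projection argument (doubly substochastic matrix plus full symmetry of $d(w,p)$, which for $p>1$ uses preservation of weak majorization under the convex map $t\mapsto t^p$) likewise replaces the paper's citation of \cite[3.a.4]{LT77} and is valid. Two small points to tighten: the bound $2$ for the averaging projection on $g(w,p)$ is asserted rather than proved --- the substochastic argument genuinely does not transfer since $g(w,p)$ is not rearrangement invariant --- but this is precisely the standard fact \cite[3.a.4]{LT77} for $1$-subsymmetric bases that the paper itself invokes, so simply cite it; and for the Garling lower bound you should fix the $B_{n,k}$ to be successive in $n$, so that the records (increasing as integers, with nonincreasing $\beta$-values) constitute a legitimate increasing subsequence receiving the weights $w_k$ in the correct order. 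Finally, your construction produces a $1$-complemented copy of $(\oplus_{n=1}^\infty\ell_\infty^{2^{n-1}})_p$ rather than of $(\oplus_{n=1}^\infty\ell_\infty^n)_p$ itself, which is harmless since these spaces are isomorphic and the theorem asks only for an isomorphic copy.
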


\begin{proof}
It was shown in \cite[Lemmas 4 and 5]{Al75} that if $w$ is (NUC) then $d(w,p)$ contains uniform copies of $\ell_\infty^n$ spanned by normalized constant coefficient block bases $(v_i^{(k)})_{i=1}^n$ with coefficients tending to zero, and their proof is valid also for $g(w,p)$.  More specifically, we have
$$v_i^{(k)}=c_k^{-1}\sum_{j\in A_i^{(k)}}d_j,\;\;\;c_k=\left\|\sum_{j\in A_i^{(k)}}d_j\right\|_{d(w,p)},\;\;\;i=1,\cdots,k,$$
for successive finite subsets of $\mathbb{N}$ which we label
$$A_1^{(k)}<A_2^{(k)}<\cdots<A_k^{(k)}<A_1^{(k+1)}<\cdots,\;\;\;k\in\mathbb{N},$$
with
$$M_k:=\#A_1^{(k)}=\#A_2^{(k)}=\cdots=\#A_k^{(k)},\;\;\;k\in\mathbb{N}.$$
For convenience, write
$$u_i^{(k)}=\sum_{j\in A_i^{(k)}}d_j,\;\;\;i=1,\cdots,k,\;\;\;k\in\mathbb{N},$$
so that each $v_i^{(k)}=c_k^{-1}u_i^{(k)}$.  In particular, the author showed that for each $K\in(1,\infty)$ one can choose the above so that for each $k\in\mathbb{N}$, the finite basic sequence $(v_i^{(k)})_{i=1}^k$ 1-dominates and is $K$-dominated by the unit vector basis of $\ell_\infty^k$.

In the proof of \cite[4.e.3]{LT77} the authors find $C\in(1,\infty)$ and $n_1<n_2<\cdots$ such that
$$
\frac{1}{C}\left(\sum_{k=1}^\infty|a_k|^p\right)^{1/p}
\leq\left\|\sum_{k=1}^\infty a_kv_1^{(n_k)}\right\|_{d(w,p)}
\leq\left(\sum_{k=1}^\infty|a_k|^p\right)^{1/p}.
$$
An analogous inequality is shown in \cite[Theorem 3.3]{AAW18} to hold for $g(w,p)$.  For convenience, we pass to subsequences and relabel $k=n_k$.  By 1-subsymmetry we now have
$$
(v_{i_k}^{(k)})_{k=1}^\infty\approx_C\ell_p
$$
for any choices $i_k\in\{1,\cdots,k\}$, $k\in\mathbb{N}$.

Now, pick any $((a_i^{(k)})_{i=1}^k)_{k=1}^\infty\in c_{00}$ and for each $k\in\mathbb{N}$ we choose $i_k\in\{1,\cdots,k\}$ so that
$$\left|a_{i_k}^{(k)}\right|=\sup_{1\leq i\leq k}\left|a_i^{(k)}\right|.$$
In the Lorentz case, there exists a permutation $\sigma$ of $\mathbb{N}$, and there exist intervals $I_1<I_2<\cdots$ with $\#I_k=\sigma(k)\cdot M_{\sigma(k)}$ for each $k\in\mathbb{N}$ and $\bigcup_{k=1}^\infty I_k=\mathbb{N}$ such that
\begin{align*}
\left\|\sum_{k=1}^\infty\sum_{i=1}^k a_i^{(k)}v_i^{(k)}\right\|_{d(w,p)}
&\leq\left\|\sum_{k=1}^\infty\left|a_{i_k}^{(k)}\right|\sum_{i=1}^kv_i^{(k)}\right\|_{d(w,p)}
\\&=\left\|\sum_{k=1}^\infty\left|a_{i_k}^{(k)}\right|c_k^{-1}\sum_{i=1}^k\sum_{j\in A_i^{(k)}}d_i\right\|_{d(w,p)}
\\&=\left(\sum_{k=1}^\infty\left|a_{i_{\sigma(k)}}^{(\sigma(k))}\right|^pc_{\sigma(k)}^{-p}\sum_{i\in I_k}w_i\right)^{1/p}
\\&\leq\left(\sum_{k=1}^\infty\left|a_{i_{\sigma(k)}}^{(\sigma(k))}\right|^pc_{\sigma(k)}^{-p}\sum_{i=1}^{\sigma(k)M_{\sigma(k)}}w_i\right)^{1/p}
\\&=\left(\sum_{k=1}^\infty\left|a_{i_{\sigma(k)}}^{(\sigma(k))}\right|^pc_{\sigma(k)}^{-p}\left\|\sum_{i=1}^{\sigma(k)}u_i^{(\sigma(k))}\right\|_{d(w,p)}^p\right)^{1/p}
\\&=\left(\sum_{k=1}^\infty\left|a_{i_{\sigma(k)}}^{(\sigma(k))}\right|^p\left\|\sum_{i=1}^{\sigma(k)}v_i^{(\sigma(k))}\right\|_{d(w,p)}^p\right)^{1/p}
\\&\leq K\left(\sum_{k=1}^\infty\left|a_{i_{\sigma(k)}}^{(\sigma(k))}\right|^p\right)^{1/p}
\\&=K\left\|((a_i^{(k)})_{i=1}^k)_{k=1}^\infty\right\|_{W_p}.
\end{align*}
On the other hand, by 1-unconditionality,
\begin{align*}
\left\|\sum_{k=1}^\infty\sum_{i=1}^k a_i^{(k)}v_i^{(k)}\right\|_{d(w,p)}
&\geq\left\|\sum_{k=1}^\infty a_{i_k}^{(k)}v_{i_k}^{(k)}\right\|_{d(w,p)}
\\&\geq\frac{1}{C}\left\|(a_{i_k})_{k=1}^\infty\right\|_{\ell_p}
\\&=\frac{1}{C}\left\|\left(\left(a_i^{(k)}\right)_{i=1}^k\right)_{k=1}^\infty\right\|_{W_p}.
\end{align*}
Since $K$ can be made arbitrarily close to 1, it follows that the constant-coefficient block basic sequence $((v_i^{(k)})_{i=1}^k)_{k=1}^\infty$ is $C$-equivalent to the standard $W_p$ basis.  Since the blocks are formed by constant coefficients with respect to a 1-symmetric (resp. 1-subsymmetric) basis, it follows by \cite[3.a.4]{LT77} that $W_p$ is complemented in $d(w,p)$ (resp. $g(w,p)$) with constant 1 (resp. 2).
\end{proof}

\begin{remark}
Even though the copy of $W_p$, $1\leq p<\infty$, from Theorem \ref{complemented-copies} above is 1-complemented in $d(w,p)$ when $w\in\mathcal{W}$ is (NUC), the copy is not itself isometric.  That is, we can find $C\in(1,\infty)$ such that there exists a norm-1 projection $P\in\mathcal{L}(d(w,p))$ such that the range of $P$ is $C$-isomorphic (not isometric) to $W_p$.
\end{remark}

Let us take a moment to prove that the reverse inclusion does not hold, and indeed that neither $d(w,p)$ nor $g(w,p)$ are ever subspaces of $W_p$, for any $1\leq p<\infty$ or $w\in\mathcal{W}$.

\begin{proposition}\label{fdd-block-sequence}
Let $E:=(\oplus_{n=1}^\infty E_n)_p$ be the $\ell_p$-sum of finite-dimensional spaces $(E_n)_{n=1}^\infty$.  If $(z_i)_{i=1}^\infty$ is a $C$-seminormalized FDD-block sequence then it is $C$-equivalent to $\ell_p$.\end{proposition}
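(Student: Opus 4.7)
The plan is to exploit the fact that in an $\ell_p$-sum of spaces, the $p$th power of the norm splits additively over the coordinates corresponding to the summands, and that disjoint supports of the blocks make this splitting collapse perfectly onto the individual $z_i$'s. The first step is to write down what an FDD-block sequence actually means in this setting: there exist successive intervals $I_1<I_2<\cdots$ of $\mathbb{N}$ and vectors $z_i\in\bigoplus_{n\in I_i}E_n$, and I will denote by $P_n:E\to E_n$ the canonical coordinate projection. The $\ell_p$-sum definition gives $\|x\|_E^p=\sum_{n=1}^\infty\|P_nx\|_{E_n}^p$ for every $x\in E$.

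Next I would fix a finitely supported scalar sequence $(a_i)$ and compute, using disjointness of the intervals $I_i$,
\[
\Bigl\|\sum_i a_iz_i\Bigr\|_E^p=\sum_{n=1}^\infty\Bigl\|P_n\Bigl(\sum_i a_iz_i\Bigr)\Bigr\|_{E_n}^p=\sum_i\sum_{n\in I_i}|a_i|^p\|P_nz_i\|_{E_n}^p=\sum_i|a_i|^p\|z_i\|_E^p,
\]
where the key point is that for each $n$ there is at most one index $i$ with $n\in I_i$, so no cross terms appear. The final step is to apply the $C$-seminormalization hypothesis $C^{-1}\leq\|z_i\|_E\leq C$ to sandwich this expression,
\[
C^{-p}\sum_i|a_i|^p\leq\Bigl\|\sum_i a_iz_i\Bigr\|_E^p\leq C^p\sum_i|a_i|^p,
\]
so that the linear map $z_i\mapsto e_i\in\ell_p$ and its inverse each have norm at most $C$, yielding the claimed $C$-equivalence to the standard $\ell_p$ basis.

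There is essentially no obstacle here: the argument is a direct unwinding of the definitions of an $\ell_p$-sum and of an FDD-block sequence, and the only subtle point worth stating carefully is the step where disjointness of the supports $I_i$ eliminates cross terms when the $p$th power of the norm is expanded. Density allows the same inequalities to persist for arbitrary elements of the closed linear span of $(z_i)$, which completes the proof.
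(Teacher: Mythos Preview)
Your proof is correct and essentially identical to the paper's: both write each $z_i$ as supported on a block of consecutive summands, expand $\|\sum_i a_iz_i\|_E^p$ using the additivity of the $\ell_p$-sum norm over disjoint supports to obtain $\sum_i|a_i|^p\|z_i\|_E^p$, and then apply the seminormalization bound $C^{-1}\leq\|z_i\|_E\leq C$. The only difference is notational (you use intervals $I_i$ and projections $P_n$ where the paper writes $z_i=\sum_{k=k_i}^{k_{i+1}-1}e_k$ with $e_k\in E_k$).
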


\begin{proof}
As $(z_i)_{i=1}^\infty$ is a FDD-block sequence, we can find $k_1<k_2<\cdots$ and vectors $e_k\in E_k$, such that
$$z_i=\sum_{k=k_i}^{k_{i+1}-1}e_k,\;\;\;i\in\mathbb{N}.$$
Then
$$\left\|\sum_{i=1}^\infty c_iz_i\right\|_E=\left(\sum_{i=1}^\infty |c_i|^p\sum_{k=k_i}^{k_{i+1}-1}\|e_k\|_{E_k}^p\right)^{1/p}\in[C^{-1},C]\cdot\|(c_i)_{i=1}^\infty\|_p.$$
\end{proof}

\begin{corollary}\label{not-subspace}
Let $1\leq p<\infty$ and $w\in\mathcal{W}$.  Then $d(w,p)$ is not a subspace of $E:=(\oplus_{n=1}^\infty E_n)_p$ for any $\ell_p$-sum of finite-dimensional spaces.  Neither is $g(w,p)$.
\end{corollary}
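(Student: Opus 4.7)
The plan is to argue by contradiction: assume an embedding $J\colon X\to E$ exists, where $X\in\{d(w,p),g(w,p)\}$, and play the $\ell_p$-like growth forced on seminormalized FDD-block sequences in $E$ by Proposition \ref{fdd-block-sequence} against the much slower growth $(\sum_{i=1}^N w_i)^{1/p}=o(N^{1/p})$ of partial sums of the canonical basis of $X$ coming from $w\in\mathcal{W}\subset c_0$.

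First I would verify that the canonical basis $(b_n)$ of $X$ is weakly null. Since $(b_n)$ is $1$-unconditional, for any $f\in X^*$ and any strictly increasing $n_1<\cdots<n_N$ one may choose modulus-one scalars $s_k$ with $s_kf(b_{n_k})=|f(b_{n_k})|$, giving
$$
\sum_{k=1}^N|f(b_{n_k})|=f\Bigl(\sum_{k=1}^N s_kb_{n_k}\Bigr)\leq\|f\|\Bigl\|\sum_{k=1}^N b_{n_k}\Bigr\|_X=\|f\|\Bigl(\sum_{i=1}^N w_i\Bigr)^{1/p},
$$
where the last equality uses the $1$-symmetry (Lorentz) or $1$-subsymmetry (Garling) of $(b_n)$. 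If $|f(b_n)|\geq\varepsilon$ held for infinitely many $n$, averaging over $N$ such indices would force $\varepsilon\leq\|f\|(\sum_{i=1}^N w_i)^{1/p}/N$; since $w\in c_0$ yields $(\sum_{i=1}^N w_i)/N\to 0$ by the Ces\`{a}ro theorem, this forces $f(b_n)\to 0$.

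Next I would transfer weak nullity through $J$ and extract an FDD-block-equivalent subsequence in $E=(\bigoplus_{n=1}^\infty E_n)_p$. The sequence $(Jb_n)$ is seminormalized (since $J$ is an embedding) and weakly null. Because each $E_n$ is finite-dimensional, weak nullity forces the FDD-projections $P_{\leq m}(Jb_n)$ to tend to $0$ in norm for each fixed $m$, while membership of $Jb_n$ in $E$ makes its $\ell_p$-tails small; a standard gliding-hump extraction then produces $n_1<n_2<\cdots$ and a seminormalized FDD-block sequence $(z_k)_{k=1}^\infty$ with $\sum_k\|Jb_{n_k}-z_k\|_E$ as small as desired, so that $(Jb_{n_k})$ is equivalent to $(z_k)$. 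By Proposition \ref{fdd-block-sequence}, $(z_k)$, and therefore $(Jb_{n_k})$, is equivalent to the canonical $\ell_p$-basis.

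The contradiction then falls out of comparing partial sums: the $\ell_p$-equivalence yields $c>0$ with $cN^{1/p}\leq\|\sum_{k=1}^N Jb_{n_k}\|_E$ for every $N$, whereas (sub)symmetry of $(b_n)$ and boundedness of $J$ give
$$
\Bigl\|\sum_{k=1}^N Jb_{n_k}\Bigr\|_E\leq\|J\|\Bigl(\sum_{i=1}^N w_i\Bigr)^{1/p}=o(N^{1/p}),
$$
which is incompatible with the lower bound. I expect the main technical obstacle to be the gliding-hump extraction in the third paragraph; it is routine for $\ell_p$-FDDs but requires a careful simultaneous control of initial FDD-projections and tails so that the perturbation between $(Jb_{n_k})$ and its block approximation is summable enough to preserve basis equivalence.
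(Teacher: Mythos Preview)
Your proof is correct and follows essentially the same route as the paper's: assume an embedding, use weak nullity of the canonical basis plus a gliding-hump argument to pass to a subsequence equivalent to an FDD-block sequence, then invoke Proposition~\ref{fdd-block-sequence} to force $\ell_p$-equivalence and derive a contradiction. The only differences are expository: the paper cites \cite[Remark 1.7]{KPSTT12} for weak nullity rather than proving it, and simply asserts the final contradiction (since the canonical basis of $d(w,p)$ or $g(w,p)$ is not equivalent to the $\ell_p$-basis) rather than spelling out the growth-rate comparison as you do.
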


\begin{proof}
If $d(w,p)$ or $g(w,p)$ is a subspace, we can find $(z_i)_{i=1}^\infty$ a seminormalized basic sequence in $E$ which is equivalent to the $d(w,p)$ or $g(w,p)$ canonical basis.  As the latter is weakly null (cf., e.g., \cite[Remark 1.7]{KPSTT12}) and subsymmetric, we may assume $(z_i)_{i=1}^\infty$ is block basic, and that its blocks do not overlap in any space $E_n$. However, such blocks are equivalent to the $\ell_p$ basis by Lemma \ref{fdd-block-sequence}. This is a contradiction.
\end{proof}

\begin{corollary}
Let $w\in\mathcal{W}$ be (NUC) and $1\leq p<\infty$, and let $P\in\mathcal{L}(d(w,p))$ be a projection whose range is isomorphic to $W_p$.  Then $P$ fails to fix a copy of $d(w,p)$, and hence, for any Banach space $X$,
$$\overline{\mathcal{J}}_{\ell_p}(X)\subseteq\overline{\mathcal{J}}_P(X)\subseteq\mathcal{S}_{d(w,p)}(X).$$
The same goes for $g(w,p)$.
\end{corollary}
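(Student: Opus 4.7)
The plan is to reduce everything to Corollary~\ref{not-subspace}. The key observation is that $W_p=(\oplus_{n=1}^\infty\ell_\infty^n)_{\ell_p}$ is an $\ell_p$-sum of finite-dimensional spaces, so by Corollary~\ref{not-subspace}, $d(w,p)$ does not isomorphically embed into $W_p$. Consequently, if $P$ fixed a copy of $d(w,p)$---that is, if there were a subspace $F\subseteq d(w,p)$ with $F\approx d(w,p)$ on which $P$ is bounded below---then $P(F)$ would be a subspace of $\mathrm{Range}(P)\approx W_p$ isomorphic to $d(w,p)$, contradicting Corollary~\ref{not-subspace}. This establishes the first assertion.

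For the inclusion $\overline{\mathcal{J}}_{\ell_p}(X)\subseteq\overline{\mathcal{J}}_P(X)$, I would exploit the fact that $\ell_p$ is $1$-complemented in $W_p$ (e.g.\ by taking a single canonical unit vector from each $\ell_\infty^n$ summand). Combining this complementation with the isomorphism $\mathrm{Range}(P)\approx W_p$ and with $P$ itself produces operators $\alpha\in\mathcal{L}(\ell_p,d(w,p))$ (whose image lies inside $\mathrm{Range}(P)$, so that $P\alpha=\alpha$) and $\beta\in\mathcal{L}(d(w,p),\ell_p)$ satisfying $\beta P\alpha=Id_{\ell_p}$. Any $T=BA$ with $A\in\mathcal{L}(X,\ell_p)$ and $B\in\mathcal{L}(\ell_p,X)$ then factors as $T=(B\beta)P(\alpha A)\in\mathcal{J}_P(X)$, and passing to closures gives the inclusion.

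For $\overline{\mathcal{J}}_P(X)\subseteq\mathcal{S}_{d(w,p)}(X)$, first note that $\mathcal{S}_{d(w,p)}(X)$ is norm-closed in $\mathcal{L}(X)$ by a routine perturbation argument: if $T\in\mathcal{L}(X)$ fixes a copy of $d(w,p)$ on some $F$ with lower bound $c$, anything within distance $c/2$ of $T$ still fixes the same copy with lower bound $c/2$. It therefore suffices to check $\mathcal{J}_P(X)\subseteq\mathcal{S}_{d(w,p)}(X)$. Given $T=BPA$, suppose $T|_F$ were bounded below for some $F\subseteq X$ with $F\approx d(w,p)$; the trivial estimate $\|BPAx\|\leq\|B\|\|P\|\|Ax\|$ forces $A|_F$ to be bounded below, whence $A(F)\approx d(w,p)$, and a second application forces $P|_{A(F)}$ to be bounded below, meaning $P$ fixes a copy of $d(w,p)$---contradicting the first paragraph. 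The argument for $g(w,p)$ is identical, since Corollary~\ref{not-subspace} covers that case as well. The main (mild) obstacle is merely keeping the bookkeeping straight in the middle step; the essential input is the non-embedding result encoded in Corollary~\ref{not-subspace}.
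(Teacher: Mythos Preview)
Your proposal is correct and follows precisely the route the paper intends: the corollary is stated in the paper without proof, as an immediate consequence of Corollary~\ref{not-subspace}, and your three paragraphs simply spell out the details (non-embedding $\Rightarrow$ $P$ fails to fix $d(w,p)$; $\ell_p$ complemented in $W_p$ gives the left inclusion; a factoring-and-perturbation argument gives the right inclusion). There is nothing to add.
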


As a final aside, we note an interesting fact about the duals of Lorentz and Garling sequence spaces for the case when $w$ is (NUC) and $p=1$.

\begin{corollary}
Let $w\in\mathcal{W}$ be (NUC).  Then each of $g(w,1)^*$ and $d(w,1)^*$ contain complemented copies of both $L_1$ and $L_\infty$.  Consequently, their operator algebras $\mathcal{L}(d(w,1)^*)$ or $\mathcal{L}(g(w,1)^*)$ each admit uncountably many closed ideals.
\end{corollary}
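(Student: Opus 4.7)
The plan is to deduce this corollary from Theorem~\ref{complemented-copies} by duality, together with the identification $W_1^*\approx\ell_\infty(\ell_1)$ noted in the introduction (via \cite[Remark~3.4]{CM11}) and the main result of \cite{SW15}.

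First I would apply Theorem~\ref{complemented-copies} with $p=1$: since $w$ is (NUC), $W_1$ is complemented in both $d(w,1)$ and $g(w,1)$. Passing to adjoints of the associated projections, $W_1^*$ sits complementedly inside both $d(w,1)^*$ and $g(w,1)^*$. A direct dual computation then gives $W_1^*\approx(\oplus_{n=1}^\infty\ell_1^{2^{n-1}})_{\ell_\infty}=W_{\infty,1}$, which by the regrouping noted in the introduction is isomorphic to $\ell_\infty(\ell_1)$.

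Next I would locate complemented copies of $L_\infty$ and $L_1$ inside $\ell_\infty(\ell_1)$. For $L_\infty$: the space $\ell_\infty$ is norm-one complemented in $\ell_\infty(\ell_1)$ via the projection sending each $\ell_1$-block to the scalar on its first basis vector, and then Pe{\l}czy\'nski's theorem $\ell_\infty\approx L_\infty$ converts this to a complemented copy of $L_\infty$. For $L_1$: $\ell_1$ itself is complemented via any single block summand; to upgrade this to $L_1[0,1]$ one can use the natural isometry $f\mapsto(E_nf)_{n=1}^\infty$ embedding $L_1$ into the $\ell_\infty$-sum of its dyadic conditional-expectation subspaces $L_1(\mathcal{F}_n)\approx\ell_1^{2^n}$, and then build a projection onto its range by a martingale-type averaging procedure.

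Finally, the uncountably-many-closed-ideals conclusion follows by combining the main result of \cite{SW15}---uncountably many closed ideals in $\mathcal{L}(\ell_\infty(\ell_1))$---with the standard lifting principle that, whenever $X=Y\oplus Z$, each closed ideal $\mathcal{I}$ of $\mathcal{L}(Y)$ extends to the closed ideal $\{T\in\mathcal{L}(X):P_YT|_Y\in\mathcal{I}\}$ of $\mathcal{L}(X)$, and distinct $\mathcal{I}$ yield distinct extensions. The hard part will be step three for $L_1[0,1]$: producing a bounded projection of $\ell_\infty(\ell_1)$ onto the conditional-expectation image. A natural approach is an ultrafilter- or Banach-limit-type averaging on the martingale image, but verifying boundedness and idempotence requires care (or an appeal to a classical embedding result in the literature).
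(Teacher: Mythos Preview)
Your opening moves---applying Theorem~\ref{complemented-copies} with $p=1$ and dualising to put a complemented copy of $W_1^*$ inside $d(w,1)^*$ and $g(w,1)^*$---match the paper exactly, and the identification $W_1^*\approx W_{\infty,1}\approx\ell_\infty(\ell_1)$ is correct. But two later steps go wrong.

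\textbf{The lifting principle is false.} The set $\{T\in\mathcal{L}(X):P_YT|_Y\in\mathcal{I}\}$ is \emph{not} in general a two-sided ideal of $\mathcal{L}(X)$. Take $X=Y\oplus Y$, $\mathcal{I}=\{0\}$, and let $T$ be the swap of summands; then $P_YT|_Y=0\in\mathcal{I}$, yet $T^2=Id_X$ has $P_YT^2|_Y=Id_Y\notin\mathcal{I}$. So the set is not closed under right multiplication. There is no general theorem guaranteeing that uncountably many closed ideals in $\mathcal{L}(Y)$ transfer to $\mathcal{L}(X)$ merely because $Y$ is complemented in $X$. The paper sidesteps this entirely: it invokes \cite[Theorem~3.7 and Proposition~3.8]{SW15}, which is a direct criterion stating that if $Z$ contains complemented copies of both $\ell_1$ and $\ell_\infty$ then $\mathcal{L}(Z)$ already admits an uncountable chain of closed ideals. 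Since you have already placed complemented copies of $\ell_\infty$ and of $\ell_1$ (the latter even via a single block summand, no need for $L_1$) inside $d(w,1)^*$ and $g(w,1)^*$, that criterion finishes the ``consequently'' clause immediately---no lifting needed.

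\textbf{The $L_1$ step.} Your conditional-expectation embedding is a reasonable idea, but as you yourself flag, producing a bounded projection onto its range is nontrivial and you have not done it. The paper avoids this work by citing \cite[Theorem~1]{HS73} (Hagler--Stegall), which gives a complemented copy of $L_1$ in $W_1^*$ off the shelf. Either route yields the stated corollary about $L_1$, but only the citation is complete as written.
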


\begin{proof}
Observe that $W_1^*$ contains a complemented copy of $\ell_\infty(\approx L_\infty)$. On the other hand, it was shown in \cite[Theorem 1]{HS73} that $W_1^*$ also contains a complemented copy of $L_1$.  By Theorem \ref{complemented-copies}, $d(w,1)^*$ and $g(w,1)^*$ each contain a complemented copy of $W_1^*$.  It was shown in \cite[Theorem 3.7 and Proposition 3.8]{SW15} that if $Z$ is a Banach space containing complemented copies of $\ell_1$ and $\ell_\infty$ then $\mathcal{L}(Z)$ admits an uncountable chain of closed ideals.  As $\ell_1$ is complemented in $L_1$, this completes the proof.
\end{proof}

\begin{remark}We still don't know whether $\mathcal{L}(d(w,1))$ or $\mathcal{L}(g(w,1))$ themselves admit infinitely many closed ideals.\end{remark}

We close by giving some final remarks on the closed ideals in $\mathcal{L}(W_p)$. In \cite{Le15} was shown that $\mathcal{L}(W_1)$ admits a unique maximal ideal characterized by the set
$$\mathcal{M}_1:=\left\{T\in\mathcal{L}(W_1):\theta T\in\overline{\mathcal{J}}_{\ell_1}(W_1,\ell_\infty)\text{ for some bdd. below }\theta\in\mathcal{L}(W_1,\ell_\infty)\right\}.$$
Note that $\mathcal{M}_1$ is the injective hull of the ideal $\overline{\mathcal{J}}_{\ell_1}(W_1)$.  Analogously, it was shown in \cite[Theorem 3.2]{LOSZ12} that the surjective hull of $\overline{\mathcal{J}}_{c_0}(W_{0,1})$ is the unique maximal ideal of $\mathcal{L}(W_{0,1})$.  Then, in \cite{KL16} that, the authors showed that the set
$$\mathcal{M}_p:=\left\{T\in\mathcal{L}(W_p):T\text{ does not fix }\ell_\infty^n\text{'s uniformly}\right\}.$$
forms the unique maximal ideal in $\mathcal{L}(W_p)$ when $1<p<\infty$. Thus, to completely classify the closed ideals in $\mathcal{L}(W_p)$, $1\leq p<\infty$, it is sufficient to prove that $\mathcal{M}_p$ is an immediate successor to $\mathcal{K}(W_p)$.  Indeed, considering the complete descriptions of the lattices of closed ideals for $\mathcal{L}(W_{2,0})$ and $\mathcal{L}(W_{2,1})$ proved in \cite{LLR04,LSZ06} (see \S1 above), this seems likely.  Unfortunately, we don't currently know whether it is the case.  In particular, we do not know of any $p\in[1,\infty)$ such that $\mathcal{M}_p=\overline{\mathcal{J}}_{\ell_p}(W_p)$.

\section*{Acknowledgement}

The author thanks an anonymous referee for his valuable suggestions on improving this paper.

\bibliographystyle{amsplain}

\end{document}